\definecolor{gris}{rgb}{0.5,0.3,0.3}
\newenvironment{acknowledgement}{\textbf{Acknowledgement.}\em}{}
\newtheorem{thm}{Theorem}[section]
\newtheorem{cor}[thm]{Corollary}
\newtheorem{lema}[thm]{Lemma}
\newtheorem{prop}[thm]{Proposition}
\theoremstyle{definition}
\theoremstyle{remark}
\newtheorem{rem}[thm]{Remark}
\numberwithin{equation}{section}
\newcommand{\R}{\mathbb R}
\newcommand{\N}{\mathbb N}
\newcommand{\LL}{\mathcal{L}_s}
\newcommand{\J}{\mathcal{J}_{s,G}}
\newcommand{\JJ}{\mathcal{J}_{1,\overline{G}}}
\newcommand{\ve}{\varepsilon}
\begin{document}
\title{Asymptotic behavior of Musielak-Orlicz-Sobolev modulars}

\author{J.C. de Albuquerque}
\address{J.C. de Albuquerque: Departamento de Matem\'atica, CCEN - Universidade Federal de Pernambuco, Av. Jorn. Aníbal Fernandes, S/N - Cidade Universitária,  Recife-PE, Brasil.}
\email{josecarlos.melojunior@ufpe.br}

\author{L.R.S. de Assis}
\address{L.R.S. de Assis: Departamento de Matem\'atica, CCEN - Universidade Federal de Pernambuco, Av. Jorn. An\'ibal Fernandes, S/N - Cidade Universitária,  Recife-PE, Brasil.}
\email{lazaro.assis@ufpe.br}

\author{M.L.M. Carvalho} 
\address{M.L.M. Carvalho: Departmento de Matem\'atica, Universidade Federal de Goi\'{a}s, 74001-970, Goi\'{a}s-GO, Brasil}
\email{marcos$\_$leandro$\_$carvalho@ufg.br}

\author{A. Salort}
\address{Ariel  Salort: Departamento de Matem\'atica, FCEyN - Universidad de Buenos Aires and IMAS - CONICET Ciudad Universitaria, Pabell\'on I (1428) Av. Cantilo s/n., Buenos Aires, Argentina.}
\email{asalort@dm.uba.ar}
\urladdr{http://mate.dm.uba.ar/~asalort}

\maketitle

\begin{abstract}
In this article we study the asymptotic behavior of anisotropic nonlocal nonstandard growth seminorms and modulars as the fractional parameter goes to 1. This gives a so-called Bourgain-Brezis-Mironescu type formula for a very general family of functionals. In the particu\-lar case of fractional Sobolev spaces with variable exponent, we point out that our proof asks for a weaker regularity of the exponent than the considered in previous articles.
\end{abstract}

\tableofcontents

\section{Introduction}

The analysis of the limit of the fractional parameter in fractional-order Sobolev type spaces has received some attention in the last years. The seminal work \cite{BBM}  paved the way to the development of an extensive literature related with the limit study of fractional parameters in several functional spaces.

In the seminal work of Bourgain-Brezis-Mironescu \cite{BBM}, the authors consider the classical fractional Sobolev spaces $W^{s,p}(\R^n)$, $s\in(0,1)$, $p\in [1,\infty)$ and study the behavior of the corres\-ponding Gagliardo-Slobodeckij seminorm as $s$ approaches 1. More precisely, they prove that, if $u\in W^{1,p}(\R^n)$, $p\in [1,\infty)$, then
$$
\lim_{s\uparrow 1} (1-s)\iint_{\R^n\times\R^n} \frac{|u(x)-u(y)|^p}{|x-y|^{n+sp}}\;dxdy = K(n,p)\int_{\R^n} |\nabla u|^p \,dx,
$$
where $K(n,p)=\frac1p \int_{\mathbb{S}^{n-1}}|\theta\cdot e|^p\,d \mathcal{H}^{n-1}(\theta)$, being $\mathbb{S}^{n-1}$ the $(n-1)$-dimensional unit sphere in $\R^n$, $\mathcal{H}^{n-1}$ the $(n-1)-$dimensional Hausdorff measure and $e\in \mathbb{S}^{n-1}$.

The case in which $\R^n$ is replaced by a bounded regular domain was considered in \cite{BBM} and \cite{davila}. The case of bounded extension domains was treated in \cite{BMR}, whilst \cite{DD} deals with arbitrary bounded domains. Similar results were proved to hold in more general fractional Sobolev spaces: the extension to the so-called magnetic fractional Sobolev spaces was dealt in \cite{PSV,SV}, and the case of spaces with anisotropic structure was studied in \cite{DB, FBS2}.

Later on, these kind of result were extended to a bigger class of functions allowing a behavior more general than a power. More precisely, the fractional Orlicz-Sobolev space $W^{s,G}(\R^n)$ is defined in terms of a Young function $G$, namely, a convex function from $[0,\infty)$ into $[0,\infty]$ vanishing at $0$, and it is defined in terms of a Luxemburg type Gagliardo seminorm. When both Young function $G$ and its complementary function satisfy an appropriated growth behavior known as the $\Delta_2$ condition, in \cite{FBS} the following limit behavior of the seminorms was proved:
$$
\lim_{s\uparrow 1} (1-s)\iint_{\R^n\times\R^n} G\left(\frac{|u(x)-u(y)|}{|x-y|^s}\right)\frac{dxdy}{|x-y|^n} = \int_{\R^n} G_0 (|\nabla u|)\,dx,
$$
where, for $t\geq0$,  $G_0(t)=\int_0^t \int_{\mathbb{S}^{n-1}} G(r|\theta\cdot e|)\,d\mathcal{H}^{n-1}(\theta)\frac{dr}{r}$ is a Young function equivalent to $G$. The case of $G$ with a general growth behavior was covered in \cite{ACPS, ACPS1}. A further extension to Carnot groups can be found in \cite{CMSV}. The case of the magnetic fractional Orlicz-Sobolev spaces was studied in \cite{FBS1}.

Young functions include as typical examples power functions, i.e. $G(t)=t^p$, $p>1$, and logarithmic perturbation of powers such as $G(t)=t^p\log (1+t)$, $p>1$. Nevertheless, anisotropic and double-phase behaviors are not contemplated in this class, and functions of the type $t^{p(x)}$ for a suitable function $p(x)$, which are related with the fractional Sobolev spaces with variable exponent, are not covered by the previous results. See \cite{URD, BR} for an introduction and properties of these spaces. In this line of research, in \cite{K} is answered the question whether a Bourgain-Brezis-Mironescu (BBM) type result is true in the fractional Sobolev spaces with variable exponent $W^{s,p(x,y)}(\R^n)$, when $p\colon \R^n\times\R^n\to\R$ is such that $p(x,\cdot)$ is $\log-$H\"older continuous for any fixed $x\in\R$ and there are constants $p^\pm$ such that $1<p^-\leq p(x,y)\leq p^+<\infty$. The main result in \cite{K} establishes that for sufficiently smooth functions, let us say $u\in C^2_c(\R^n)$, it holds that
$$
\lim_{s\uparrow 1}s(1-s)\iint_{\R^n\times\R^n} \frac{|u(x)-u(y)|^{p(x,y)}}{|x-y|^{n+sp(x,y)}}\;dxdy = \int_{\R^n} K_{n,\overline{p}(x)} |\nabla u(x)|^{\overline{p}(x)}\,dx,
$$
where $\overline{p}(x):=p(x,x)$ and $K_{n,\overline{p}(x)}= \frac{1}{\overline{p}(x)}\int_{\mathbb{S}^{n-1}} |\theta\cdot e|^{\overline{p}(x)}\,d\mathcal{H}^{n-1}(\theta)$.

While the previous result holds for smooth functions, the author of \cite{K} proves that it does not hold for all functions in $W^{1,\overline{p}(x)}(\R^n)$, even when the variable exponent $p$ is smooth. This is in sharp contrast to the case when $p$ is constant. The reason for this is that the target space $W^{1,\overline{p}(x)}$ is too large for the previous BBM expression to be true in general.

The aim of this manuscript is to study the asymptotic behavior of energy functionals related to general anisotropic, non-local, non-standard growth spaces as $s\uparrow 1$. These spaces include fractional Orlicz-Sobolev spaces and fractional Sobolev spaces with variable exponents, which are examples of the aforementioned case.

Given a generalized Young function $G\colon \R^n\times\R^n \times [0,\infty) \to \R$ (see Section \ref{gral.young.functions} for details)  and $s\in (0,1)$ we consider the energy functional
$$
\J (u):=\iint_{\R^n\times\R^n} G(x,y,|D_s u(x,y)|)\,d\mu(x,y),
$$
where the $s-$H\"older quotient $D_s u$ and the measure $\mu$ are defined as
$$
D_su(x,y):=\frac{u(x)-u(y)}{|x-y|^s}, \qquad d\mu(x,y) := \frac{dxdy}{|x-y|^n}.
$$
Here, the functional $\J$ is well-defined when $u$ belongs to the fractional Musielak-Sobolev space $W^{s,G}(\R^n)$ considered in \cite{ABSS, ABSS2, BMO} (see Section \ref{spaces} for precise definitions).

In order to prove our results we assume some structural hypotheses on the generalized Young function $G$. First, we ask for a boundedness condition on $G$ with respect to $(x,y)\in\R^n\times\R^n$: there exist constants $0<C_1\leq C_2<\infty$ such that
\begin{equation} \tag{$H_1$} \label{Hi1'}
	C_1\leq\inf_{x,y\in\R^n}G(x,y,1)\leq G(x,y,1)\leq \sup_{x,y\in\R^n} G(x,y,1) \leq C_2.
\end{equation}
In order to analyze the behavior as $s\uparrow 1$, we impose the following  condition: 
\begin{equation} \tag{$H_2$} \label{Hi3'}
y \mapsto G(x,y,t)\;\; \mbox{is continuous}.
\end{equation}
Finally, we assume the following growth behavior for any $(x,y,t)\in \R^n\times\R^n\times[0,\infty)$: there exist constants $1<p^-\leq p^+<\infty$ such that
\begin{equation} \tag{$H_3$}  \label{Hi4}
p^-\leq \frac{tg(x,y,t)}{G(x,y,t)} \leq p^+.
\end{equation}
It is well known that $G$ and its complementary function satisfying $\Delta_2$ condition if and only if \eqref{Hi4} holds.

The dependence of the energy functional $\J$ on both $x$ and $y$ adds an extra level of difficulty when dealing with the limit behavior on the fractional parameter $s$. Given that our results include the case of fractional Sobolev spaces with variable exponents, we cannot expect to obtain a BBM-type formula beyond $C^2_c$ functions. Keeping these considerations in mind, our main result, stated in Theorem \ref{asym. behavior}, claims that for any $u\in C^2_c(\R^n)$,
$$
\lim_{s\uparrow 1} (1-s) \J(u) = \int_{\R^n} H_0(x,|\nabla u(x)|)\,dx,
$$
where the function $H_0$ is given by
$$
H_0(x,t)=\int_0^1 \int_{\mathbb{S}^{n-1}} G(x,x,t|w_n|r)\,d\mathcal{H}^{n-1}(w) \frac{dr}{r}
$$
and $w_n$ is the $n-$th coordinate of any point in $\mathbb{S}^{n-1}$.

As a consequence, we will obtain in Corollary \ref{Asym. behavior. norm} a BBM type inequality for norms. In Proposition \ref{prop.H0} it is proved that the limit function $H_0(x,t)$ is in fact equivalent to the generalized Young function $\overline{G}(x,t):=G(x,x,t)$.

\medskip 

Examples of functions fulfilling our hypotheses include:
\medskip

\begin{enumerate}[leftmargin=20pt]
\item[(i)] $G(x,y,t)=A(t)$
for any Young function $A$ (a convex function $A\colon [0,\infty)\to [0,\infty)$ with $A(0)=0$) such that $1<p^-\leq \frac{t A'(t)}{A(t)}\leq p^+<\infty$ for certain constants $p^+$ and $p^-$. In particular, for $A(t)=t^p$ and $A(t)=t^p\log (1+t)$, $1<p<\infty$;

\medskip

\item[(ii)] $G(x,y,t)=t^p+ a(x,y)t^q$ where $t\geq0$, $1<p<q<\infty$ and $a(x,y)$ is a non-negative bounded and continuous function in the second variable;

\medskip

\item[(iii)]$G(x,y,t)=t^{p(x,y)}$ with $t\geq0$, and $p(x,y)$ is a continuous function in the second variable such that $1<p^-\leq p(x,y)\leq p^+<\infty$ for all $(x,y)\in \R^n\times\R^n$.
\end{enumerate}
\medskip

These examples include equations defined  in fractional Orlicz-Sovolev spaces (see for instance \cite{ACPS,ACPS1,FBS,FBS1}), double phase problems (see for instance \cite{BCM 1,BCM 2}) and Sobolev spaces of variable exponent (see for instance \cite{BR,K,URD}).

The limit formula we obtain in Theorem \ref{asym. behavior} is not valid for the entire space $W^{1,\overline G}(\R^n)$ for an arbitrary generalized Young function $G$ that satisfies $\eqref{Hi1'}$--$\eqref{Hi4}$, as demonstrated by some counterexamples in \cite{K}. However, in Section \ref{examples}, we prove its validity for certain particular classes. Additionally, for any generalized Young function, the limit formula holds in a smaller Sobolev space, which is determined by the bounds $p^\pm$ given in \eqref{Hi4}, as shown in Corollary \ref{corolary}.

It is worth noting that for the fractional Sobolev space with variable exponent (i.e., example (iii) above), our result imposes weaker assumptions on $p(x,y)$ than those required in \cite{K}.

Recently, there has been consideration of fractional anisotropic spaces where the functions have different fractional regularity and integrability in each coordinate direction, as seen in \cite{CKW,DB}. The techniques used in our results enable us to study energy functionals where the $s$-H\"{o}lder quotient depends solely on a direction, that is, 
$$
D_s^k u(x,h):=\frac{u(x-he_k)-u(x)}{|h|^s}, \quad \text{ with } k\in\{1,\ldots, n\},
$$
being $e_k$ the $k-$th canonical vector in $\R^n$. More precisely, in Theorem \ref{thm-anisotropic} we prove that, for $u\in C^2_c(\R^n)$, 
$$
\lim_{s\uparrow 1}(1-s) \int_{\R^n} \int_{\R} G(x,x-he_k,|D_s^k u(x,y)|)\,\frac{dhdx}{|h|} = \int_{\R^n} H_0\left(x,\left|\frac{\partial u(x)}{\partial x_k} \right|\right)\,dx,
$$
where in this case the limit function $H_0$ is defined as
$$
H_0(x,t)=2 \int_0^1 G(x,x,tr) \frac{dr}{r}.
$$

We leave some open questions related to the asymptotic behavior of these energies. Due to the high dependence on spatial coordinates, study whether a BBM type formula holds for sequences of functions (depending on $s$) is a challenging task. This question remains unanswered even in the case of the fractional Laplacian with variable exponent. Due to the lack of this result, we were not able to obtain a BBM type formula for norms (see Corollary \ref{Asym. behavior. norm}), although we suspect this is valid. Another interesting yet highly nontrivial point is understanding the behavior of the energies as the fractional parameter $s$ approaches 0, in the spirit of the seminal work of Maz'ya and Shaposhnikova \cite{MZ}.

The manuscript is organized as follows: in Section \ref{sec2} we introduce the notion of generalized Young functions and the definition of fractional Musielak-Sobolev spaces; Section \ref{sec3} contains the proof of our main result; in Section \ref{examples} we introduce some application examples considered in this theory; finally in section \ref{sec5} we give some extensions and final remarks.

%

\section{Preliminaries} \label{sec2}

\subsection{Generalized Young functions}  \label{gral.young.functions}
We consider the class of functions  $G\colon \R^n\times\R^n\times [0,\infty)\to [0,\infty)$ given by
$$
G(x,y,t)=\int_0^t g(x,y,s)\,ds,
$$
where $g\colon \R^n\times\R^n\times[0,\infty)\to [0,\infty)$ is a  function satisfying
\begin{itemize}[leftmargin=20pt]
\item[(i)] $g(x,y,0)=0$ and $g(x,y,t)>0$ for any $(x,y)\in\R^n\times\R^n, t>0$;
\item[(ii)] $g(\cdot,\cdot,t)$ is nondecreasing in $[0,\infty)$;
\item[(iii)] $g(\cdot,\cdot,t)$ is right continuous in $[0,\infty)$ and $\lim_{t\to\infty} g(\cdot,\cdot,t)=\infty$.
\end{itemize}

It is not hard to show that hypotheses $(i)$--$(iii)$ imply that $G(x,y,t)$ is continuous, locally Lipschitz continuous, strictly increasing and convex in $t\geq 0$ for any $(x,y)\in \R^n\times\R^n$. Moreover, $G(x,y,0)=0$ and $G(x,y,t)$ has a sublinear decay at zero and a superlinear growth
near infinity in the variable $t$:
$$
\lim_{t\to 0^+}\frac{G(x,y,t)}{t}=0, \qquad \lim_{t\to\infty}\frac{G(x,y,t)}{t}=\infty.
$$
A function $G$ defined as before is known as \emph{generalized Young function}.

Throughout this paper we will assume generalized Young functions satisfying the structural assumptions \eqref{Hi1'}--\eqref{Hi4}.
 
We remark that, from \eqref{Hi1'} and \eqref{Hi4} it follows that (see \cite[Lemma 2.2]{ABSS}).
\begin{lema}\label{prop. 1}
For any $(x,y)\in\R^n\times\R^n$ and $a,b\geq0$ it holds that
$$
\min\{a^{p^-}, a^{p^+}\}G(x,y,b)\leq G(x,y,ab)\leq \max\{a^{p^-}, a^{p^+}\}G(x,y,b),
$$
$$
 C_1\min\{b^{p^-}, b^{p^+}\}\leq G(x,y,b)\leq C_2\max\{b^{p^-}, b^{p^+}\}.
$$	
\end{lema}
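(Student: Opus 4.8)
The plan is to recognize that hypothesis $\eqref{Hi4}$ is precisely a two-sided bound on the logarithmic derivative of the map $t \mapsto G(x,y,t)$. Fix $(x,y) \in \R^n \times \R^n$ and write $\phi(t) := G(x,y,t)$. Since $G$ is locally Lipschitz in $t$ and strictly increasing with $\phi(0)=0$, the map $t \mapsto \log \phi(t)$ is locally absolutely continuous on $(0,\infty)$, with $(\log \phi)'(t) = g(x,y,t)/G(x,y,t)$ for a.e.\ $t>0$. Hypothesis $\eqref{Hi4}$ then reads
$$
\frac{p^-}{t} \leq \frac{g(x,y,t)}{G(x,y,t)} \leq \frac{p^+}{t} \qquad \text{for a.e. } t>0.
$$

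First I would prove the homogeneity-type estimate. For $a \geq 1$ and $b>0$ I integrate the above inequality over $[b, ab]$ using the fundamental theorem of calculus for absolutely continuous functions, obtaining
$$
p^- \log a \leq \log \frac{\phi(ab)}{\phi(b)} \leq p^+ \log a,
$$
and exponentiating yields $a^{p^-} \phi(b) \leq \phi(ab) \leq a^{p^+}\phi(b)$, which is the claim since $\min\{a^{p^-}, a^{p^+}\} = a^{p^-}$ and $\max\{a^{p^-},a^{p^+}\} = a^{p^+}$ when $a \geq 1$. For $0 < a < 1$ I integrate instead over $[ab,b]$; now $\log a < 0$, so the inequalities reverse upon exponentiation and I arrive at $a^{p^+}\phi(b) \leq \phi(ab) \leq a^{p^-}\phi(b)$, which again matches the min/max formulation since for $a<1$ one has $a^{p^+} = \min\{a^{p^-},a^{p^+}\}$ and $a^{p^-} = \max\{a^{p^-},a^{p^+}\}$. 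The cases $a=0$ or $b=0$ are trivial because $\phi(0)=0$. This establishes the first displayed inequality.

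The second inequality then follows by specialization. Taking $b=1$ in the first estimate and renaming $a$ as $b$ gives
$$
\min\{b^{p^-}, b^{p^+}\}\, G(x,y,1) \leq G(x,y,b) \leq \max\{b^{p^-}, b^{p^+}\}\, G(x,y,1),
$$
and inserting the uniform bounds $C_1 \leq G(x,y,1) \leq C_2$ from $\eqref{Hi1'}$ immediately yields the desired two-sided control $C_1 \min\{b^{p^-},b^{p^+}\} \leq G(x,y,b) \leq C_2 \max\{b^{p^-},b^{p^+}\}$.

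I expect the only genuine technical point---rather than a real obstacle---to be justifying the fundamental theorem of calculus for $\log \phi$, since $g(x,y,\cdot)$ is merely right continuous and nondecreasing rather than continuous. This is handled by the local Lipschitz (hence absolute) continuity of $G$ in $t$ noted after the definition of generalized Young functions, together with the fact that $\phi$ stays bounded away from zero on any interval $[\delta,\infty)$ with $\delta>0$; the identity $(\log \phi)' = g/G$ a.e.\ then suffices. All bounds are uniform in $(x,y)$, so the estimates hold pointwise for every $(x,y)\in\R^n\times\R^n$ as stated.
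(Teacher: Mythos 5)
Your proof is correct, and it is essentially the standard argument behind this statement: the paper itself gives no inline proof but cites \cite[Lemma 2.2]{ABSS}, where the same idea is used, namely rewriting \eqref{Hi4} as $p^-/t\leq g(x,y,t)/G(x,y,t)\leq p^+/t$ and integrating this logarithmic-derivative bound over $[b,ab]$ (resp.\ $[ab,b]$), then deducing the second display from the first via $b=1$ and \eqref{Hi1'}. Your care with the absolute continuity of $\log G(x,y,\cdot)$ and with the degenerate cases $a=0$, $b=0$ is sound and closes the only technical point.
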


\medskip

We define the \emph{complementary Young function} of $G$ as
$$
\tilde G(x,y,t):=\sup_{w\geq 0} (tw-G(x,y,w)).
$$
Consequently, the following Young's inequality holds for $a,b\geq 0$ and $(x,y)\in\R^n\times\R^n$:
$$
ab\leq G(x,y,a)+\tilde G(x,y,b).
$$
The complementaty Young function satisfies the following relation.
\begin{lema} \label{lema1}
Let $G$ be a Young function and $\tilde G$ its complementary function. Then,
$$
\tilde G(x,y,g(x,y,t)) \leq p^+G(x,y,t),
$$
for any $(x,y,t)\in \R^n\times \R^n\times [0,+\infty)$.
\end{lema}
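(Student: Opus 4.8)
The plan is to prove the inequality
$$
\tilde G(x,y,g(x,y,t)) \leq p^+ G(x,y,t)
$$
by working directly from the definition of the complementary function together with the growth bound \eqref{Hi4}. First I would fix $(x,y)$ and write, using the definition of $\tilde G$,
$$
\tilde G(x,y,g(x,y,t))=\sup_{w\geq 0}\bigl(g(x,y,t)\,w - G(x,y,w)\bigr).
$$
Since $G(x,y,\cdot)$ is convex and $g(x,y,\cdot)$ is its (right-continuous, nondecreasing) derivative, the standard Legendre-transform identity tells us the supremum is attained at $w=t$, where the value equals $t\,g(x,y,t)-G(x,y,t)$; I would justify this attainment via the convexity of $G$, which guarantees $g(x,y,t)\,w - G(x,y,w)\leq g(x,y,t)\,t - G(x,y,t)$ for every $w\geq 0$ by the subgradient inequality $G(x,y,w)\geq G(x,y,t)+g(x,y,t)(w-t)$. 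Hence
$$
\tilde G(x,y,g(x,y,t)) = t\,g(x,y,t)-G(x,y,t).
$$

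Next I would invoke hypothesis \eqref{Hi4}, specifically the upper bound $t\,g(x,y,t)\leq p^+ G(x,y,t)$, to conclude
$$
\tilde G(x,y,g(x,y,t)) = t\,g(x,y,t)-G(x,y,t)\leq p^+ G(x,y,t) - G(x,y,t) = (p^+-1)G(x,y,t)\leq p^+ G(x,y,t),
$$
where the last step uses $G\geq 0$ and $p^+>1$. This yields the claimed bound, in fact with the slightly sharper constant $p^+-1$.

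The only delicate point is the rigorous justification that the supremum defining $\tilde G$ is attained exactly at $w=t$, giving the Legendre-transform identity $\tilde G(x,y,g(x,y,t))=t\,g(x,y,t)-G(x,y,t)$; this is where the structural properties (i)--(iii) of $g$ and the resulting convexity and strict monotonicity of $G$ are essential, and one must be slightly careful if $g$ has jump discontinuities (right-continuity of $g$ handles this, since the subgradient inequality still holds for the right derivative). Everything after that is a one-line application of \eqref{Hi4}. I expect this attainment step to be the main (and essentially only) obstacle, and it is entirely routine once convexity is used; no estimate on the $(x,y)$-dependence is needed since the argument is pointwise in $(x,y)$.
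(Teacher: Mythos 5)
Your proof is correct and follows essentially the same route as the paper: both establish the Young-equality identity $\tilde G(x,y,g(x,y,t))=t\,g(x,y,t)-G(x,y,t)$ by showing the supremum is attained at $w=t$, and then apply \eqref{Hi4}. The only (cosmetic) difference is that you justify the attainment via the subgradient inequality while the paper argues via the sign of $\frac{d}{dw}\bigl(g(x,y,t)w-G(x,y,w)\bigr)$; your version even avoids the paper's informal differentiation at jump points of $g$ and records the slightly sharper constant $p^+-1$.
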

\begin{proof}
Let $t\geq 0$ be fixed and for each $w \in [0, +\infty)$ consider the function 
$$f(x,y,w):=g(x,y,t)w-G(x,y,w).$$ 
Observe that $\displaystyle\frac{d}{dw}f(x,y,w)= 0$ when $w=t$ and 
$$
\begin{aligned}
	\frac{d}{dw}f(x,y,w)=g(x,y,t)-g(x,y,w) \geq 0, &\quad \mbox{when } \, w< t,\\
	\frac{d}{dw}f(x,y,w)=g(x,y,t)-g(x,y,w) \leq 0, &\quad \mbox{when } \, w>t,
\end{aligned}
$$
since $g(\cdot,\cdot,t)$ is nondecreasing  in $t$. Therefore, $f(x,y,t) \geq f(x,y,w)$ for any $w \in [0, +\infty)$, and this means that
$$
\tilde G(x,y,g(x,y,t))=\sup_{w\geq 0} f(x,y,w) = g(x,y,t)t-G(x,y,t).
$$
Due to \eqref{Hi4}, this expression can be bounded by $p^+ G(x,y,t)$.
\end{proof}


\subsection{Musielak-Sobolev spaces} \label{spaces}
Given an open set $\Omega\subset \R^n$ and a generalized Young function $G\colon \Omega\times\Omega\times\R \to [0,+\infty)$  satisfying \eqref{Hi4}, we consider the generalized Young function $\overline{G}:\Omega\times [0,+\infty)\to[0,+\infty)$ given by  
$$
\overline{G}(x,t):=G(x,x,t)=\int_0^t \overline{g}(x,s)\,ds,
$$
where $\overline{g}(x,t)=g(x,x,t)$ for any $(x,t)\in \Omega \times [0,+\infty)$. It is easy to verify that \eqref{Hi4} implies 
	$$
p^{-}\leq \frac{t \overline{g}(x,t)}{\overline{G}(x,t)}\leq p^{+}, \quad \mbox{for any} \hspace{0,2cm} x\in \Omega \hspace{0,2cm}  \mbox{and} \hspace{0,2cm} t>0.
$$

Given $s\in(0,1)$ we consider the spaces
\begin{align*}
&L^{\overline{G}(\cdot)}(\Omega):= \left\{u\colon \Omega\to \R \text{ measurable}: \JJ(u) <\infty\right\},\\
&W^{s,G(\cdot,\cdot)}(\Omega):=\left\{u\in L^{\overline{G}(\cdot)}(\Omega)\colon \J(u)<\infty\right\}
\end{align*}
where
\begin{align*}
\J (u):=\iint_{\R^n\times\R^n} G(x,y,|D_s u(x,y)|)\,d\mu(x,y), \qquad \JJ(u):=\int_\Omega \overline{G}(x,|u(x)|)\,dx
\end{align*}
and the $s-$H\"older quotient $D_s u$ and the measure $\mu$ are defined as
$$
D_su(x,y):=\frac{u(x)-u(y)}{|x-y|^s}, \qquad d\mu(x,y) := \frac{dxdy}{|x-y|^n}.
$$

These spaces are separable and reflexive Banach spaces endowed with the Luxemburg norms
$$
\|u\|_{\overline G(\cdot)}:=\inf\left\{ \lambda>0\colon \int_\Omega \overline{G}\left(\frac{u}{\lambda}\right)\,dx\leq 1\right\}, \qquad 
\|u\|_{s,G(\cdot,\cdot)}:=\|u\|_{\overline{G}(\cdot)} + [u]_{s,G(\cdot,\cdot)},
$$
respectively, with corresponding seminorm
$$
[u]_{s,G(\cdot,\cdot)}:=\inf\left\{ \lambda>0\colon \J \left(\frac{u}{\lambda}\right)\,dx\leq 1\right\}.
$$

We also consider the local space
$$
W^{1,\overline{G}(\cdot)}(\Omega):=\left\{ u \in L^{\overline G(\cdot)}\colon |\nabla u| \in L^{\overline G(\cdot)} \right\}
$$
with the corresponding norm  given by $\|u\|_{1,\overline{G}(\cdot)}:= \|u\|_{\overline G(\cdot)} + \|\nabla u\|_{\overline G(\cdot)}$.

\subsection{Notation}
In order to simplify the reading of the paper, we drop the dependence on $x$ and $y$ in the spaces and we just write
$L^{\overline G}(\Omega)$, $W^{1,\overline G}(\Omega)$ and $W^{s,G}(\Omega)$ instead of $L^{\overline G(\cdot)}(\Omega)$, $W^{1,\overline{G}(\cdot)}(\Omega)$ and $W^{s,G(\cdot,\cdot)}(\Omega)$, respectively. Similarly, we write  $\|u\|_{\overline G}$, $\|u\|_{1,\overline G}$, $\|u\|_{s,G}$ and $[u]_{s,G}$ in place of  $\|u\|_{\overline G(\cdot)}$, $\|u\|_{1,\overline G(\cdot)}$, $\|u\|_{s,G(\cdot,\cdot)}$ and $[u]_{s,G(\cdot,\cdot)}$, respectively. 

\medskip

Throughout the paper, the following notation will also be
used:
\begin{enumerate}[leftmargin=20pt] \itemsep2pt \parskip3pt \parsep2pt
\item[$\circ$] The norm in the usual space $L^p(\R^n)$, $p\in [1,\infty]$, is denoted by $\|u\|_p$.
\item[$\circ$] The characteristic function of a subset $B \subset \R^n$ is denoted by $\chi_{B}$. 
	
	\item[$\circ$] The $(n-1)$-dimensional unit sphere in $\R^n$ is denoted by $\mathbb{S}^{n-1}$.
	
	\item[$\circ$] The volume of the unit ball in $\R^n$ is denoted by $\omega_n$.
	
	\item[$\circ$] The volume of the $(n-1)$-dimensional unit sphere in $\R^n$ is then $n\omega_n$.
\end{enumerate}

\section{The BBM formula} \label{sec3}
 
We begin this section by proving a technical lemma. These result plays a very important role in the proof of the BBM formula.

\begin{lema}\label{conv. Young function}
For any $x\in \mathbb{R}^n$ and $t\geq0$, it holds that
$$
\lim_{s\uparrow 1} (1-s) \int_{0}^{1} \int_{\mathbb{S}^{n-1} }G(x,x-rw, t|w_n|r^{1-s}) \; \,dS_w \frac{dr}{r}= H_0(x,t),
$$
where 
\begin{equation}\label{limit function}
	H_0(x,t):=\int_0^1 \int_{\mathbb{S}^{n-1}} G(x,x,t|w_n|r)\,dS_w \frac{dr}{r}
\end{equation}
and $w_n$ is the $n-$th coordinate of any point in $\mathbb{S}^{n-1}$.
\end{lema}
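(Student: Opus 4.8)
The plan is to remove the degenerate prefactor $(1-s)$ by a logarithmic change of variables in the radial integral, and then pass to the limit inside the integral via dominated convergence. Concretely, I would substitute $\rho = r^{1-s}$, a smooth increasing bijection of $(0,1)$ onto itself with $r = \rho^{1/(1-s)}$ and $\tfrac{dr}{r} = \tfrac{1}{1-s}\tfrac{d\rho}{\rho}$. Inserting this and cancelling the factor $(1-s)$ rewrites the left-hand side as
$$
(1-s)\int_0^1\!\!\int_{\mathbb{S}^{n-1}} G(x, x-rw, t|w_n|r^{1-s})\,dS_w\,\frac{dr}{r} = \int_0^1\!\!\int_{\mathbb{S}^{n-1}} G\Big(x, x-\rho^{1/(1-s)}w, t|w_n|\rho\Big)\,dS_w\,\frac{d\rho}{\rho},
$$
so that the only remaining $s$-dependence sits in the second slot of $G$, through the factor $\rho^{1/(1-s)}$.

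For the pointwise limit I would note that for each fixed $\rho\in(0,1)$ and $w\in\mathbb{S}^{n-1}$ one has $\rho^{1/(1-s)}\to 0$ as $s\uparrow 1$, whence $x-\rho^{1/(1-s)}w\to x$. By the continuity assumption \eqref{Hi3'} on $y\mapsto G(x,y,\cdot)$ the integrand then converges to $G(x,x,t|w_n|\rho)$, which is precisely the integrand defining $H_0(x,t)$ in \eqref{limit function}.

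It remains to justify the interchange of limit and integral. Here I would invoke Lemma \ref{prop. 1}: since $|w_n|\leq 1$, for every $s$ and every $(\rho,w)$ one has the $s$-independent bound
$$
G\Big(x, x-\rho^{1/(1-s)}w, t|w_n|\rho\Big)\leq C_2\max\{(t\rho)^{p^-}, (t\rho)^{p^+}\}.
$$
Dividing by $\rho$ and integrating over $\mathbb{S}^{n-1}$ gives the majorant $\Phi(\rho) = C_2\, n\omega_n\, \rho^{-1}\max\{(t\rho)^{p^-},(t\rho)^{p^+}\}$, which is integrable on $(0,1)$ because the lower growth exponent $p^->1$ from \eqref{Hi4} controls the singularity at the origin (near $\rho=0$ the integrand behaves like $\rho^{p^--1}$) while the interval is bounded. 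With this uniform integrable majorant, dominated convergence applies and yields that the limit equals $H_0(x,t)$.

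The one delicate point is the construction of the $s$-uniform integrable majorant after the substitution: one must use that the bound from Lemma \ref{prop. 1} is genuinely independent of the second argument of $G$, so that the moving point $x-\rho^{1/(1-s)}w$ causes no difficulty, and that the resulting singularity at $\rho=0$ is integrable, which is exactly where the condition $p^->1$ enters. The change of variables itself, though elementary, is the mechanism that converts the degenerate factor $(1-s)$ into the logarithmic scaling $\tfrac{d\rho}{\rho}$ that makes the limit both finite and nonzero.
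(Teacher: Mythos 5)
Your proof is correct and follows essentially the same route as the paper's: the substitution $\rho=r^{1-s}$ converting $(1-s)\frac{dr}{r}$ into $\frac{d\rho}{\rho}$, the pointwise limit $x-\rho^{1/(1-s)}w\to x$ handled by the continuity hypothesis \eqref{Hi3'}, and an $s$-uniform integrable majorant built from \eqref{Hi1'} and Lemma \ref{prop. 1} to justify dominated convergence. The only cosmetic difference is that the paper simplifies its majorant to the constant $C_2\max\{t^{p^-},t^{p^+}\}$ via $|w_n|^{p^-}\rho^{p^--1}\leq 1$, whereas you retain the $\rho$-dependent bound $C_2\,n\omega_n\,\rho^{-1}\max\{(t\rho)^{p^-},(t\rho)^{p^+}\}$; both are integrable on $(0,1)\times\mathbb{S}^{n-1}$, so the argument goes through identically.
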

\begin{proof}
By performing the change of variables $\rho=r^{1-s}$, we deduce that
\begin{align*}
\int_{0}^{1} \int_{\mathbb{S}^{n-1} }G(x,x-rw,& t|w_n|r^{1-s}) \; \,dS_w \frac{dr}{r}\\
&=\frac{1}{1-s} \int_0^1 \int_{\mathbb{S}^{n-1}} G\left(x,x-\rho ^{\frac{1}{1-s}}w,t|w_n| \rho \right) \,dS_w\frac{d\rho}{\rho}.
\end{align*}
Since $0<\rho<1$ and $G(x,\cdot, t)$ is continuous at $x$ by \eqref{Hi3'}, it follows that 
$$\lim_{s\uparrow 1}G\left(x,x-\rho ^{\frac{1}{1-s}}w, t|w_n| \rho \right)=G\left(x,x, t|w_n| \rho \right).$$
Moreover, using \eqref{Hi1'} and Lemma \ref{prop. 1},  we have
\begin{align*}
G\left(x, x-\rho^{\frac{1}{1-s}}w ,t|w_n| \rho \right)\rho^{-1}&\leq \sup_{x,y\in \mathbb{R}^N}G(x,y,1)|w_n|^{p^-}\rho^{p^- -1}\max\{t^{p^-},t^{p^+}\}\\
& 
\leq C_2 \max\{t^{p^-},t^{p^+}\} \in L^1((0,1) \times \mathbb{S}^{n-1}).
\end{align*}
Therefore, the result follows by Lebesgue's Dominated Convergence Theorem.
\end{proof}

The next proposition ensures that $H_0(x,t)$ is a generalized Young function and that it is equiva\-lent to $\overline{G}(x,t)$. Thereby, the Musielak-Orlicz spaces $L^{\overline{G}}(\mathbb{R}^n)$ and $L^{H_0}(\mathbb{R}^n)$ are the same.

\begin{prop} \label{prop.H0} 
	The function $H_0$ defined in \eqref{limit function} is a generalized Young function. Furthermore, there exist positive constants $C_1$ and $C_2$ such that
	\begin{equation}\label{equiv1}
		C_1  \overline{G}(x,t)\leq  H_0(x,t) \leq C_2 \overline{G}(x,t),
	\end{equation}
	for any $(x,t)\in \R^n \times [0,+\infty)$.
\end{prop}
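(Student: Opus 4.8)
The plan is to establish the two assertions in turn, beginning with the explicit two-sided bound \eqref{equiv1}, since the equivalence essentially drives the growth properties needed in the first claim. Fix $x$ and $t$, and observe that in the integrand defining $H_0$ the scaling factor is $a=|w_n|r\in[0,1]$ for $w\in\mathbb{S}^{n-1}$ and $r\in(0,1)$. Applying Lemma \ref{prop. 1} with $b=t$, $y=x$, and using that $a\le 1$ forces $\min\{a^{p^-},a^{p^+}\}=a^{p^+}$ and $\max\{a^{p^-},a^{p^+}\}=a^{p^-}$, I would obtain the pointwise sandwich
$$
(|w_n|r)^{p^+}\,\overline{G}(x,t)\le G(x,x,t|w_n|r)\le (|w_n|r)^{p^-}\,\overline{G}(x,t).
$$
Integrating against $dS_w\,\frac{dr}{r}$ and separating the radial and angular variables, the radial integrals $\int_0^1 r^{p^\pm-1}\,dr=1/p^\pm$ converge because $p^\pm>1$, while the angular integrals $\int_{\mathbb{S}^{n-1}}|w_n|^{p^\pm}\,dS_w$ are finite and strictly positive. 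This yields \eqref{equiv1} with the explicit geometric constants
$$
C_1=\frac{1}{p^+}\int_{\mathbb{S}^{n-1}}|w_n|^{p^+}\,dS_w, \qquad C_2=\frac{1}{p^-}\int_{\mathbb{S}^{n-1}}|w_n|^{p^-}\,dS_w,
$$
both positive and finite; note that the $\overline{G}(x,t)$ factor pulls out exactly, so these constants do not depend on $x$.

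To prove that $H_0$ is a generalized Young function, I would exhibit its generating density. Setting
$$
h_0(x,t):=\int_0^1\int_{\mathbb{S}^{n-1}}\overline{g}(x,t|w_n|r)\,|w_n|\,dS_w\,dr,
$$
an application of Tonelli's theorem (all integrands nonnegative) together with the change of variables $u=\sigma|w_n|r$ in the inner radial integral shows that $\int_0^t h_0(x,\sigma)\,d\sigma=H_0(x,t)$, the null set $\{w_n=0\}$ causing no trouble. It then remains to verify that $h_0$ inherits properties $(i)$–$(iii)$ from $\overline{g}$: vanishing at $t=0$ and positivity for $t>0$ follow from the corresponding facts for $\overline{g}(x,\cdot)$; monotonicity in $t$ is immediate since $\overline{g}(x,\cdot)$ is nondecreasing; right continuity follows from that of $\overline{g}(x,\cdot)$ via dominated convergence, with dominating function supplied by the bound $\overline{g}(x,\tau)\le p^+C_2\max\{\tau^{p^--1},\tau^{p^+-1}\}$ coming from \eqref{Hi4} and Lemma \ref{prop. 1}; and $h_0(x,t)\to\infty$ as $t\to\infty$ follows from $\overline{g}(x,\tau)\to\infty$ by monotone convergence. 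Once this representation is in place, convexity, continuity and strict monotonicity of $H_0$ in $t$ follow from the general implication recorded just after the definition of generalized Young function; alternatively, the sublinear decay at $0$ and superlinear growth at infinity of $H_0$ can be read off directly from \eqref{equiv1} and the corresponding behavior of $\overline{G}$.

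The computations are routine; the only point requiring genuine care is the interchange of integration and the change of variables establishing $H_0(x,t)=\int_0^t h_0(x,\sigma)\,d\sigma$, where one must confirm that the singular weight $1/r$ is absorbed exactly by the Jacobian $|w_n|r$ and that integrability near $r=0$ is guaranteed by $p^->1$. I expect this identification of the density, rather than the equivalence estimate itself, to be the main technical obstacle.
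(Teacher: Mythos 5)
Your proof is correct, and it deviates from the paper's in one interesting place. For the equivalence \eqref{equiv1} you and the paper do essentially the same thing: both apply Lemma \ref{prop. 1} with scaling factor at most $1$ and separate the radial and angular integrals; your version keeps the factor $|w_n|$ in the upper bound (yielding $C_2=\frac{1}{p^-}\int_{\mathbb{S}^{n-1}}|w_n|^{p^-}dS_w$) where the paper first discards it by monotonicity and gets $\frac{n\omega_n}{p^-}$, so your constant is marginally sharper but the argument is identical in substance. The genuine difference is in exhibiting the generating density. The paper performs the change of variables $\rho=tr$ to write $H_0(x,t)=\int_0^t h_0(x,\rho)\,d\rho$ with $h_0(x,\rho)=\int_{\mathbb{S}^{n-1}}G(x,x,|w_n|\rho)\rho^{-1}\,dS_w$, a density expressed through $G$ itself, and then gets properties $(i)$--$(ii)$ from the monotonicity of $t\mapsto G(x,x,|w_n|t)/t$ (a consequence of convexity and $G(x,x,0)=0$) and $(iii)$ from continuity and superlinearity of $G$. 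You instead differentiate under the integral sign, producing $h_0(x,t)=\int_0^1\int_{\mathbb{S}^{n-1}}\overline{g}(x,t|w_n|r)\,|w_n|\,dS_w\,dr$, justify the representation by Tonelli plus the substitution $u=\sigma|w_n|r$, and let $h_0$ inherit $(i)$--$(iii)$ directly from $\overline{g}$, using \eqref{Hi4} and Lemma \ref{prop. 1} for the dominating function. Both routes are sound; the paper's is slicker (one substitution, no interchange of limits beyond the obvious), while yours is more mechanical and makes the dependence on the density $\overline g$ explicit. In fact the two densities coincide: performing your substitution in the $r$-variable rather than the $\sigma$-variable in $\int_0^t h_0(x,\sigma)\,d\sigma$ collapses your expression to exactly the paper's $h_0$, so the approaches are reconciled by a single Fubini step. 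Your closing worry is also correctly calibrated: the only delicate point is the absorption of the weight $1/r$ and integrability near $r=0$, which $p^->1$ handles via $G(x,x,t|w_n|r)/r\leq C\,r^{p^--1}$.
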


\begin{proof}
	
We prove first that $H_0$ is a generalized Young function. Note that, making use of the change of variables $\rho=tr$, we can write
$$H_0(x,t)=\int_0^t \int_{\mathbb{S}^{n-1}} G(x,x,|w_n|\rho)\,dS_w \frac{d\rho}{\rho}= \int_0^t h_0(x,\rho)\; d\rho, $$
where
$$h_0(x,\rho):=
\begin{cases}
	\displaystyle\int_{\mathbb{S}^{n-1}} \frac{G(x,x,|w_n|\rho)}{\rho} \,dS_w,& \quad \mbox{when }\, \rho\in (0,+\infty),\\ 
	0, & \quad \mbox{when }\, \rho=0.
\end{cases}
$$

The hypotheses $(i)$ and $(ii)$ are an immediate consequence of increasing monotonicity of function $\frac{G(x,x,|w_n|t)}{t}$ in $t\in(0,+\infty)$ for any $x\in \R^n$. Finally, since $G(x, x, t)$ is continuous and has superlinear growth near infinity in the variable $t$ for any $x\in\R^n$, we conclude that $h_0$ satisfies $(iii)$. 
	
It remains to prove the equivalence \eqref{equiv1}. Given $x\in\mathbb{R}^n$ and $t\geq0$, it follows from the monotonicity of  $G(x,y,t)$ at $t$ and Lemma \ref{prop. 1} that 
\begin{align*}
	H_0(x,t) &\leq \int_0^1 \int_{\mathbb{S}^{n-1}} G(x,x,tr)\,dS_w \frac{dr}{r} \\
	&
	\leq  \int_0^1 \int_{\mathbb{S}^{n-1}} G(x,x,t) r^{p^--1}\,dS_w dr\\
	& =\frac{n\omega_n}{p^-}G(x,x,t).
\end{align*}
On the other hand, by using Lemma \ref{prop. 1}, we have
\begin{align*}
	H_0(x,t) &\geq  \int_0^1 \int_{\mathbb{S}^{n-1} } (|w_n|r)^{p^+}G(x,x,t) dS_w \frac{dr}{r}\\
	& = G(x,x,t) \left( \int_{\mathbb{S}^{n-1} } |w_n|^{p^+}dS_w\right) \left(\int_0^1 r^{p^{+}-1}\, dr\right)\\ 
	& =C(n,p^+)G(x,x,t).
\end{align*}
This ends the proof.
\end{proof}

In the following we establish our main result.

\begin{thm}[BBM formula]\label{asym. behavior}
Let $u\in C^2_c(\R^n)$. Then, 
\begin{align}\label{eq.asym.behavior}
	\lim_{s\uparrow 1} (1-s) \J(u) = \int_{\R^n} H_0(x,|\nabla u(x)|)\,dx,
\end{align}
where $H_0$ was defined in \eqref{limit function}.
\end{thm}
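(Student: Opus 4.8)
The plan is to reduce the double integral to a polar form around the diagonal and then isolate the contribution coming from scales $|x-y|\to 0$, which is precisely where the factor $(1-s)$ produces a finite limit. First I would change variables $y=x-h$ and write $h=rw$ with $r=|h|>0$, $w\in\mathbb{S}^{n-1}$, so that $d\mu=\frac{dr}{r}\,dS_w\,dx$ and
\begin{align*}
\J(u)=\int_{\R^n}\int_0^\infty\int_{\mathbb{S}^{n-1}} G\Big(x,\,x-rw,\,\tfrac{|u(x)-u(x-rw)|}{r^s}\Big)\,dS_w\,\frac{dr}{r}\,dx.
\end{align*}
I would split the $r$-integral at $r=1$ into a near-diagonal part $\J_{\mathrm{near}}$ and a far part $\J_{\mathrm{far}}$. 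Using that $u$ has compact support $K$, for $r\ge 1$ the integrand (in $x$) is supported on a set of measure at most $2|K|$, and since $|u(x)-u(x-rw)|\le 2\|u\|_\infty$, Lemma \ref{prop. 1} bounds its third-slot value by $C_2 r^{-sp^-}$ once the argument drops below $1$; hence $\J_{\mathrm{far}}(u)$ stays bounded uniformly for $s$ near $1$, so $(1-s)\J_{\mathrm{far}}(u)\to 0$.

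For the near part I would establish, for each fixed $x$, the pointwise limit
\begin{align*}
I_s(x):=(1-s)\int_0^1\int_{\mathbb{S}^{n-1}} G\Big(x,x-rw,\tfrac{|u(x)-u(x-rw)|}{r^s}\Big)\,dS_w\,\frac{dr}{r}\ \xrightarrow[s\uparrow1]{}\ H_0(x,|\nabla u(x)|).
\end{align*}
Since $u\in C^2_c$, Taylor's theorem gives $u(x)-u(x-rw)=r\,\nabla u(x)\cdot w+O(r^2)$ uniformly, so the difference quotient equals $r^{1-s}|\nabla u(x)\cdot w|$ up to an error of order $r^{2-s}$. I would then compare $I_s(x)$ with the linearized quantity $\Phi_s(x)$ obtained by replacing the third slot by $r^{1-s}|\nabla u(x)\cdot w|$. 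To recover the precise $|w_n|$ appearing in $H_0$, I would rotate $w\mapsto R_x w$ with $R_x e_n=\nabla u(x)/|\nabla u(x)|$ (the case $\nabla u(x)=0$ being trivial, since then both $\Phi_s(x)$ and $H_0(x,0)$ vanish); by rotation invariance of $dS_w$ this turns $|\nabla u(x)\cdot w|$ into $|\nabla u(x)|\,|w_n|$, and $\Phi_s(x)\to H_0(x,|\nabla u(x)|)$ follows from the obvious analogue of Lemma \ref{conv. Young function}, in which the second slot $x-rR_xw$ replaces $x-rw$: after $\rho=r^{1-s}$ it becomes $x-\rho^{1/(1-s)}R_xw\to x$, so the rotation is harmless in the limit.

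The error $I_s(x)-\Phi_s(x)$ I would control by the mean value theorem together with \eqref{Hi4}, using $|G(\cdot,\xi_1)-G(\cdot,\xi_2)|\le g(\cdot,\max\{\xi_1,\xi_2\})|\xi_1-\xi_2|$, $g(\cdot,\xi)\le p^+ G(\cdot,\xi)/\xi$, and $G(\cdot,\xi)/\xi\le C\xi^{p^--1}$ from Lemma \ref{prop. 1}. Since $|\xi_1-\xi_2|\lesssim r^{2-s}$ and $\xi_i\lesssim r^{1-s}$ for $r<1$, the integrand is $\lesssim r^{(1-s)(p^--1)}r^{2-s}$, so after including $\frac{dr}{r}$ the power of $r$ is $(1-s)(p^--1)+1-s=(1-s)p^-$ and $(1-s)\int_0^1 r^{(1-s)p^-}\,dr=\frac{1-s}{1+(1-s)p^-}\to0$. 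Finally I would upgrade the pointwise convergence of $I_s$ to convergence of $\int_{\R^n}I_s(x)\,dx$ by dominated convergence: the Lipschitz bound for $u$ and Lemma \ref{prop. 1} give $I_s(x)\le C\,\chi_{K'}(x)$ uniformly in $s$ (with the same exponent cancellation producing a bounded constant $C/p^-$), where $K'$ is the $1$-neighborhood of $K$, an $L^1(\R^n)$ dominating function. The main obstacle is the interplay between the limit $s\uparrow1$ and the genuine dependence of $G$ on its second variable $x-rw$, which cannot be frozen at $x$ beforehand; this is exactly what Lemma \ref{conv. Young function} resolves, but combining it with the rotation needed for $|w_n|$ while absorbing the Taylor remainder uniformly in $s$ and extracting an integrable domination is the delicate point, and everything hinges on the algebraic cancellation turning the error exponent into $(1-s)p^-$.
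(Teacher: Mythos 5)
Your proposal is correct and follows essentially the same route as the paper's proof: the same split of the $h$-integral at $|h|=1$ with the far part killed by the factor $(1-s)$, Taylor linearization of the difference quotient with error of order $r^{2-s}$, polar coordinates and a rotation reducing $|\nabla u(x)\cdot w|$ to $|\nabla u(x)|\,|w_n|$, Lemma \ref{conv. Young function} for the pointwise limit, and dominated convergence in $x$. The only (harmless) deviations are that you dispose of the far part globally via the compact support rather than pointwise in $x$ --- which lets you dominate the remaining near part by $C\chi_{K'}$ instead of the paper's majorant with the tail $|x|^{-n-\frac12}$ for $|x|\geq 2R$ --- and that you treat explicitly the rotated second slot $x-rR_x w$ (and the trivial case $\nabla u(x)=0$) in the analogue of Lemma \ref{conv. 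Young function}, points the paper's proof passes over silently.
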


\begin{proof}
For a fixed $x\in \R^n$, we split the integral
\begin{align*} 
\int_{\R^n} G(x,x-h,|D_su(x,x-h)|)\frac{dh}{|h|^n}&= 
 \int_{|h|<1} G(x,x-h,|D_su(x,x-h)|)\frac{dh}{|h|^n}\\ 
 &\quad+ \int_{|h|\geq 1} G(x,x-h,|D_su(x,x-h)|)\frac{dh}{|h|^n}\\
 &=\colon I_1+I_2.
\end{align*}
Let us deal with $I_2$. By using the monotonicity and  convexity of $G(x,y, t)$ in $t$, \eqref{Hi1'} and Lemma \ref{prop. 1}, we have that
\begin{align} \label{bound.I2}
\begin{split}
I_2 &\leq
\int_{|h|\geq 1} G\left(x,x-h, \frac{2\|u\|_\infty}{|h|^s} \right)\frac{dh}{|h|^n}\\
&\leq
\int_{|h|\geq 1} G\left(x,x-h,  2\|u\|_\infty  \right)\frac{dh}{|h|^{n+s}}\\
&\leq C_2 2^{p^+}\max\{\|u\|_\infty^{p^-},\|u\|_\infty^{p^+}\} \frac{n\omega_n}{s},
\end{split}
\end{align}
from where we obtain
\begin{equation} \label{int2}
\lim_{s\uparrow 1} (1-s)I_2 = 0.
\end{equation}
Let us now estimate $I_1$. Since $u$ is smooth and $G(x,y,t)$ is Lipschitz continuous in $t$,
\begin{align*}
\Big| G\left(x,x-h,|D_s u(x,x-h)| \right) - G&\left(x,x-h,\left| \nabla u(x)\cdot \frac{h}{|h|^s}\right|  \right)\Big|\nonumber \\
&\leq  L \frac{|u(x)-u(x-h)-\nabla u(x)\cdot h|}{|h|^s}\leq C |h|^{2-s},
\end{align*}
where $L$ is the Lipschitz constant of $G$ on the interval $[0,\|\nabla u\|_\infty]$ and $C$ depends on the $C^2-$norm of $u$. Since 
$$
\int_{|h|<1} |h|^{2-s-n}\,dh = \frac{n\omega_n}{2-s},
$$
it follows that
\begin{align*}
\lim_{s\uparrow 1} (1-s)I_1 =\lim_{s\uparrow 1} (1-s)\int_{|h|<1} G\left(x,x-h,\left| \nabla u(x)\cdot \frac{h}{|h|^s} \right| \right) \frac{dh}{|h|^n}.
\end{align*}
Observe that, by using spherical coordinates, we can write 
\begin{align*}
\int_{|h|<1} G\left(x,x-h,\left|\nabla u(x)\cdot \frac{h}{|h|^s} \right| \right)& \frac{dh}{|h|^n} =
\int_0^1 \int_{|h|=r} G\left(x,x-h , \left| \nabla u(x)\cdot \frac{h}{|r|^s} \right|  \right) \,dS_h\frac{dr}{r^n}\\
&=
\int_0^1 \int_{\mathbb{S}^{n-1}} r^{n-1}G\left(x,x-rw,|\nabla u(x)\cdot w| r^{1-s} \right) \,dS_w\frac{dr}{r^n} \\
&=
\int_0^1 \int_{\mathbb{S}^{n-1}} G\left(x,x-rw,|\nabla u(x)||w_n| r^{1-s} \right) \,dS_w\frac{dr}{r},
\end{align*}
where in the last equality we have performed a rotation such that $\nabla u(x)=|\nabla u(x)|e_n$. Thus, in view of Lemma \ref{conv. Young function}, we have that
\begin{equation} \label{int1}
\lim_{s\uparrow 1}(1-s)I_1=H_0(x,|\nabla u(x)|),
\end{equation}
for any $x\in\mathbb{R}^n$. 

Gathering \eqref{int2} and \eqref{int1}, we conclude that, for any $x\in\R^n$,
$$
\lim_{s\uparrow 1}(1-s)\int_{\R^n} G(x,x-h,|D_su(x,x-h)|)\frac{dh}{|h|^n} = H_0(x,|\nabla u(x)|).
$$

\medskip


In order to complete the proof, it only remains to show the existence of an integrable majorant for $(1-s)F_s$, where
$$
F_s(x):=\int_{\R^n}G(x,x-h,|D_su(x,x-h)|)\frac{dh}{|h|^n}.
$$
Since $u\in C_c^2(\R^n)$, we can assume without loss of generality that $\text{supp}\,(u)\subset B_R(0)$ with $R>1$. First we analyze the behavior of $F_s(x)$ for small values of $x$. When $|x|<2R$ we can write
$$
|F_s(x)| = \left(\int_{|h|<1} + \int_{|h|\geq 1}\right)   G(x,x-h,|D_su(x,x-h)|)\frac{dh}{|h|^n} := I_1 + I_2.
$$
By using the expression of $I_1$ obtained before, together with the convexity and monotonicity of $G$, and Lemma \ref{prop. 1}, we obtain
\begin{align} \label{bound.F1}
	\begin{split}
		I_1&\leq\int_{|h|<1} G\left(x,x-h,\left|\nabla u(x)\cdot \frac{h}{|h|^s} \right| \right) \frac{dh}{|h|^n} +C\\
		&\leq \int_{|h|<1} |h|^{1-s-n} G(x,x-h, \|\nabla u\|_\infty)\,dh+ C\\
		&\leq C_2\frac{n\omega_n}{1-s}(\|\nabla u\|_\infty^{p^+} + \|\nabla u\|_\infty^{p^-})+ C.
	\end{split}
\end{align}
Furthermore, it is follows from \eqref{bound.I2} that
\begin{align} \label{bound.F2}
I_2 \leq 2^{p^+}\frac{C_2 n\omega_n}{s} (\|u\|^{p+}_\infty+\|u\|^{p-}_\infty).
\end{align}
On the other hand, when $|x|\geq 2R$, $u$ vanishes and we have that
$$
F_s(x)=\int_{B_R(0)}G\left(x,y,\frac{|u(y)|}{|x-y|^s}\right)\frac{dy}{|x-y|^n}.
$$
Since $|x-y|\geq |x|-|y|\geq \frac12|x|$, from the monotonicity of $G$ and  Lemma \ref{prop. 1}, we get
\begin{align} \label{bound.F3}
\begin{split}
|F_s(x)|&\leq 
\frac{2^n}{|x|^n}\int_{B_R(0)}G\left(x,y,\frac{2^s|u(y)|}{|x|^s}\right)\,dy\\
&\leq 
\frac{2^{n+sp^+}}{|x|^{n+s}}\int_{B_R(0)}G\left(x,y,|u(y)|\right)\,dy\\
&\leq 
\frac{2^{n+p^+}}{|x|^{n+\frac12}} C_2 n\omega_n R^n (\|u\|_\infty^{p^+} + \|u\|_\infty^{p^-}),
\end{split}
\end{align}
for any $s\geq \frac12$. Finally, from \eqref{bound.F1}, \eqref{bound.F2} and \eqref{bound.F3} we obtain that
$$
(1-s)|F_s(x)|\leq C\left( \chi_{B_{2R}(0)} + |x|^{-n-\frac12} \chi_{B_{2R}(0)^c}\right) \in L^1(\R^n),
$$
where $C>0$ is a constant depending of $n$, $p^\pm$ and $u$, but independent of $s$. 

Therefore, the result follows from the Lebesgue's Dominated Convergence Theorem  for any $u\in C^2_c(\R^n)$.
\end{proof}

Observe that Theorem \ref{asym. behavior} holds for any smooth function, but in general could be false in the space $W^{1,\overline G}(\R^n)$ as showed in \cite[Corollary 1.3]{K}.  Although that space is too large for \eqref{eq.asym.behavior} to be true, as a direct consequence of Lemma \ref{prop. 1} and the Theorem \ref{asym. behavior} in the usual fractional Sobolev spaces, we get the following.

\begin{cor} \label{corolary}
	The equality \eqref{eq.asym.behavior} holds for any $u\in W^{1,p^-}(\R^n)\cap W^{1,p^+}(\R^n)$.
\end{cor}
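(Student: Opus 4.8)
The plan is to obtain the corollary from the smooth case (Theorem \ref{asym. behavior}) by a density argument, using Lemma \ref{prop. 1} to dominate the nonstandard modular $\J$ by the standard fractional Gagliardo seminorms, for which both the limit and a uniform-in-$s$ bound are classical. First I would fix $u\in W^{1,p^-}(\R^n)\cap W^{1,p^+}(\R^n)$ and, by mollification and truncation, choose $u_j\in C^2_c(\R^n)$ with $u_j\to u$ \emph{simultaneously} in $W^{1,p^-}(\R^n)$ and $W^{1,p^+}(\R^n)$ (a single mollified sequence works for both exponents). Writing $[w]_{s,p}^p:=\iint |w(x)-w(y)|^p|x-y|^{-n-sp}\,dx\,dy$, the second inequality of Lemma \ref{prop. 1} gives
\[
\J(w)\le C_2\iint \max\{|D_sw|^{p^-},|D_sw|^{p^+}\}\,d\mu \le C_2\left([w]_{s,p^-}^{p^-}+[w]_{s,p^+}^{p^+}\right),
\]
while the classical theory of \cite{BBM} provides a constant $C=C(n,p^\pm)$ with $(1-s)[w]_{s,p}^p\le C\|\nabla w\|_p^p$ for all $s\in(0,1)$ and $w\in W^{1,p}(\R^n)$. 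Hence, with $\delta_j:=\|\nabla(u-u_j)\|_{p^-}^{p^-}+\|\nabla(u-u_j)\|_{p^+}^{p^+}\to 0$, one obtains the key uniform control $\limsup_{s\uparrow1}(1-s)\J(u-u_j)\le C\delta_j$.

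The core step is a refined convexity estimate coming from the $\Delta_2$-type bound in Lemma \ref{prop. 1}: for each $\varepsilon>0$ there is $C_\varepsilon>0$, independent of $(x,y)$, with $G(x,y,A+B)\le(1+\varepsilon)G(x,y,A)+C_\varepsilon G(x,y,B)$ for all $A,B\ge0$. Indeed, splitting $A+B=\theta\tfrac{A}{\theta}+(1-\theta)\tfrac{B}{1-\theta}$, using convexity and $G(x,y,b/\theta)\le\theta^{-p^+}G(x,y,b)$ (Lemma \ref{prop. 1} with $a=1/\theta>1$) yields the claim with $1+\varepsilon=\theta^{1-p^+}$ and $C_\varepsilon=(1-\theta)^{1-p^+}$. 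Applying this to $|D_su|\le|D_su_j|+|D_s(u-u_j)|$ and to the symmetric splitting, then integrating, gives
\[
\J(u)\le(1+\varepsilon)\J(u_j)+C_\varepsilon\J(u-u_j), \qquad \J(u_j)\le(1+\varepsilon)\J(u)+C_\varepsilon\J(u-u_j).
\]

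To conclude I would multiply by $(1-s)$, use Theorem \ref{asym. behavior} (so $(1-s)\J(u_j)\to\int_{\R^n}H_0(x,|\nabla u_j|)\,dx=:\Lambda(u_j)$ since $u_j\in C^2_c$) together with the uniform error bound, and then let $j\to\infty$ and $\varepsilon\to0$. Standard $\liminf$/$\limsup$ bookkeeping with the two displayed inequalities sandwiches
\[
\Lambda(u)\le\liminf_{s\uparrow1}(1-s)\J(u)\le\limsup_{s\uparrow1}(1-s)\J(u)\le\Lambda(u),
\]
which is the desired identity \eqref{eq.asym.behavior}. Here I also need $\Lambda(u_j)\to\Lambda(u)$, which follows from $H_0(x,t)\le C\max\{t^{p^-},t^{p^+}\}$ (Proposition \ref{prop.H0} combined with Lemma \ref{prop. 1}) and a routine modular-continuity argument under $L^{p^\pm}$ convergence of the gradients.

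The main obstacle is exactly controlling the error $(1-s)\J(u-u_j)$ uniformly as $s\uparrow1$: since no clean triangle inequality is available for the inhomogeneous modular $\J$, one must replace it by the refined convexity splitting above and then absorb the error through the classical uniform BBM estimate for the pure-power seminorms. Once these two ingredients are secured, everything else is elementary.
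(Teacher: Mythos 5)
Your proof is correct and follows essentially the same route as the paper: approximate $u$ by $C^2_c$ functions converging in $W^{1,p^-}(\R^n)\cap W^{1,p^+}(\R^n)$, dominate $(1-s)\J$ by the pure-power seminorms via Lemma \ref{prop. 1} together with the classical bound of \cite{BBM}, and pass to the limit using Theorem \ref{asym. behavior} and Proposition \ref{prop.H0}; your explicit $(1+\varepsilon,C_\varepsilon)$-convexity splitting is precisely the modular-continuity ingredient that the paper outsources to \cite[Proposition 3.7]{AACS}. One cosmetic fix: the stated inequality $(1-s)[w]_{s,p}^p\le C\|\nabla w\|_p^p$ for \emph{all} $s\in(0,1)$ is false near $s=0$ (the classical estimate carries an additional term of order $\tfrac{2^p}{s}\|w\|_p^p$), but since that term is multiplied by $(1-s)$ and you only invoke the bound through $\limsup_{s\uparrow 1}$, your key uniform control $\limsup_{s\uparrow 1}(1-s)\J(u-u_j)\le C\delta_j$ stands as claimed.
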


\begin{proof}
For any $u\in W^{1,p^-}(\R^n)\cap W^{1,p^+}(\R^n)$, we take a sequence $\{u_k\}_{k\in\mathbb{N}}\subset C^2_c(\R^n)$ such that $u_k \to u$ in $W^{1,p^-}(\R^n)$ and $	W^{1,p^+}(\R^n)$.  Without loss of generality, we may assume that $u_k\to u$ a.e. in $\mathbb{R}^n$. Observe that 
\begin{align*}
	|(1-s)\J(u) - \mathcal{J}_{1,H_0}(|\nabla u|)|&\leq (1-s)|\J(u) - \J(u_k)|\\
	& \quad + |(1-s) \J(u_k) - \mathcal{J}_{1,H_0}(|\nabla u_k|)|\\ 
	& \quad + |\mathcal{J}_{1,H_0}(|\nabla u_k|) - \mathcal{J}_{1,H_0}(|\nabla u|)|\\
	& =:I_1 + I_2 + I_3,
\end{align*}
where $\mathcal{J}_{1,H_0}(u):=\int_{\R^n}H_0(x,|u(x)|)\,dx$. 
Using Lemma \ref{prop. 1}, we can verify that $W^{1,p^-}(\R^n)\cap W^{1,p^+}(\R^n) \subset W^{1,\overline{G}}(\R^n)$ and $u_k\to u $ in $ W^{1,\overline{G}}(\R^n)$. This fact together with the Proposition \ref{prop.H0} imply that $I_3 \to 0$ as $k\to \infty$.
	
By Lemma \ref{prop. 1} and \cite[Theorem 1]{BBM}, we have that
\begin{align*}
	\J(v)&\leq C_2\iint_{\R^n\times\R^n} (|D_sv(x,y)|^{p^-} + |D_sv(x,y)|^{p^+} )\;d\mu\\
	& \leq \frac{C_2n \omega_n}{p^-}\left[\frac{1}{1-s} \left( \|\nabla v\|_{p^-}^{p^-} + \| \nabla v\|_{p^+}^{p^+} \right) + \frac{2^{p^+}}{s} \left( \|v\|_{p^-}^{p^-} + \|v\|_{p^+}^{p^+} \right)\right],
\end{align*}
for any $v \in W^{1,p^-}(\R^n)\cap W^{1,p^+}(\R^n)$. From where we deduce that $\J(u_k - u) \to 0$ as $k\to\infty$. Then, by \cite[Proposition 3.7]{AACS}, $I_1 \to 0$ as $k\to\infty$. Thus, for any $\varepsilon>0$, we can take $k$ enough large such that
$$
|(1-s)\J(u) - \mathcal{J}_{1,H_0}(|\nabla u|)|\leq \varepsilon +I_2.
$$
Therefore, taking the limit as $s \uparrow1$ and invoking Theorem \ref{asym. behavior}, the result follows.
\end{proof}

The asymptotic behavior of modulars stated in Theorem \ref{asym. behavior}  gives indeed a BBM type inequality formula for norms. For this purpose, instead of the seminorm $[\cdot]_{s,G}$ defined in Section \ref{sec2} as
$$
[u]_{s,G}:=\inf\left\{ \lambda>0\colon \J \left(\frac{u}{\lambda}\right)\,dx\leq 1\right\},
$$
we consider the equivalent one defined as
$$
[[u]]_{s,G}:=\inf\left\{ \lambda>0\colon (1-s)\J \left(\frac{u}{\lambda}\right)\,dx\leq 1\right\}.
$$
In this case, the definition of the seminorm gives that
	\begin{equation} \label{eqqq1}
		(1-s)\J\left(\frac{u}{[[u]]_{s,G}}\right)\leq 1.
	\end{equation}
The following result establishes a BBM type inequality formula for norms.
\begin{cor}\label{Asym. behavior. norm}
	Let $u\in C_c^2(\R^n)$. Then
	$$
\limsup_{s\uparrow 1}[[u]]_{s,G} \leq \|\nabla u\|_{H_0},
	$$
	where $H_0$ was defined in \eqref{limit function}.
\end{cor}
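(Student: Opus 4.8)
The plan is to turn the asymptotic identity of Theorem \ref{asym. behavior} into the membership condition that defines the seminorm, by applying the BBM formula not to $u$ itself but to a rescaling $u/\lambda$. The guiding observation is that, by the very definition of $[[u]]_{s,G}$ as an infimum, in order to certify $[[u]]_{s,G}\le\lambda$ it suffices to check that $(1-s)\J(u/\lambda)\le 1$, and the asymptotic formula tells us exactly the limiting value of this quantity as $s\uparrow 1$.

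First I would dispose of the trivial case $\nabla u\equiv 0$: since $u\in C^2_c(\R^n)$ this forces $u\equiv 0$, and then both sides vanish. So assume $\|\nabla u\|_{H_0}>0$ and fix an arbitrary $\lambda>\|\nabla u\|_{H_0}$. Since $H_0$ is a generalized Young function by Proposition \ref{prop.H0}, the Luxemburg norm $\|\cdot\|_{H_0}$ is well defined and $\lambda\mapsto\int_{\R^n}H_0(x,|\nabla u|/\lambda)\,dx$ is nonincreasing, so that for any $\mu>\|\nabla u\|_{H_0}$ one has $\int_{\R^n}H_0(x,|\nabla u|/\mu)\,dx\le 1$. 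Choosing $\mu$ with $\|\nabla u\|_{H_0}<\mu<\lambda$ and writing $|\nabla u|/\lambda=(\mu/\lambda)\,|\nabla u|/\mu$, the convexity of $H_0(x,\cdot)$ together with $H_0(x,0)=0$ gives the scaling bound $H_0(x,\beta t)\le\beta H_0(x,t)$ for $\beta\in[0,1]$, whence
$$
\int_{\R^n}H_0\!\left(x,\tfrac{|\nabla u|}{\lambda}\right)dx\le\frac{\mu}{\lambda}\int_{\R^n}H_0\!\left(x,\tfrac{|\nabla u|}{\mu}\right)dx\le\frac{\mu}{\lambda}<1.
$$
Next I would apply Theorem \ref{asym. behavior} to $u/\lambda\in C^2_c(\R^n)$, noting that $\nabla(u/\lambda)=\nabla u/\lambda$, to obtain
$$
\lim_{s\uparrow 1}(1-s)\J\!\left(\tfrac{u}{\lambda}\right)=\int_{\R^n}H_0\!\left(x,\tfrac{|\nabla u|}{\lambda}\right)dx\le\frac{\mu}{\lambda}<1.
$$
Because this limit is strictly smaller than $1$, there is $s_0\in(0,1)$ such that $(1-s)\J(u/\lambda)\le 1$ for every $s\in(s_0,1)$, and hence $[[u]]_{s,G}\le\lambda$ for all such $s$ by the definition of the seminorm. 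Passing to the $\limsup$ gives $\limsup_{s\uparrow 1}[[u]]_{s,G}\le\lambda$, and letting $\lambda\downarrow\|\nabla u\|_{H_0}$ yields the claim.

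The one delicate point, and the place where care is needed, is securing the \emph{strict} inequality $<1$ in the limiting modular: this is what converts the asymptotic identity into the membership condition defining the infimum $[[u]]_{s,G}$, ruling out the borderline situation in which the limit equals $1$ and the comparison $[[u]]_{s,G}\le\lambda$ could fail for $s$ near $1$. The scaling estimate $H_0(x,\beta t)\le\beta H_0(x,t)$, a consequence of convexity and $H_0(x,0)=0$, is precisely what provides this strictness, and the freedom to choose $\mu$ strictly between $\|\nabla u\|_{H_0}$ and $\lambda$ is what makes it available.
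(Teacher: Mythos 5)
Your proof is correct and follows essentially the same route as the paper: apply Theorem \ref{asym. behavior} to a rescaled copy of $u$, use convexity to push the modular strictly below the threshold $1$, and conclude from the definition of the seminorm $[[\cdot]]_{s,G}$. The only cosmetic difference is where the epsilon of room is inserted: the paper rescales by exactly $\|\nabla u\|_{H_0}$, obtains the limit $\ell\leq 1$, and inflates the denominator by a factor $1+\varepsilon_s$ using convexity of $G$, whereas you rescale by a fixed $\lambda>\|\nabla u\|_{H_0}$ and use convexity of $H_0$ (through the intermediate value $\mu$) to make the limiting modular strictly less than $1$ -- both devices handle the same borderline case in which the limiting modular equals $1$.
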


\begin{proof}
Since we are assuming the $\Delta_2$ condition, one can check that bounded modulars are equiva\-lent to bounded seminorms.  Indeed, if $\int_{\R^n} H_0(x,|\nabla u|)\,dx \leq C$ for some $C\geq 1$, then by Lemma \ref{prop. 1}, we have
$$
\int_{\R^n} H_0\left(x, \frac{|\nabla u|}{C^\frac{1}{p^-}} \right)\,dx \leq  \frac{1}{C} \int_{\R^n} H_0(x,|\nabla u|)\,dx\leq 1,	$$
which gives $\|\nabla u\|_{H_0} \leq C^\frac{1}{p^-}$. Otherwise, if $\|\nabla u\|_{H_0} \leq C$, with $C\geq 1$,
$$
1\geq \int_{\R^n} H_0\left(x, \frac{|\nabla u|}{\|\nabla u\|_{H_0}}\right)\,dx \geq 
\int_{\R^n} H_0\left(x, \frac{|\nabla u|}{C}\right)\,dx  \geq \frac{1}{C^{p^+}} \int_{\R^n} H_0(x, |\nabla u|)\,dx
$$
and therefore, $\int_{\R^n} H_0(x,|\nabla u|)\,dx \leq C^{p^+}$. The same argument can be applied for $[[\cdot]]_{s,G}$ and $(1-s)\J(\cdot)$.

Let $u\in C_c^2(\R^n)$. On the one hand, using Theorem \ref{asym. behavior}, the definition of the norm $\|\cdot\|_{H_0}$ leads to
\begin{align*}
	\lim_{s\uparrow 1} (1-s)\J \left(\frac{u}{  \|\nabla u\|_{H_0}    } \right)  = \int_{\R^n} H_0\left(x, \frac{|\nabla u|}{ \|\nabla u\|_{H_0}} \right)\, dx = \colon\ell \leq 1.
\end{align*}
Thus, by definition of limit, there exists $\ve_s>0$ such that $\ve_s \to 0$ as $s\to 1^+$ and
$$
\left|(1-s)\J \left(\frac{u}{  \|\nabla u\|_{H_0}    } \right) - \ell \right| \leq \ve_s.
$$
In particular, this gives that
$$	
(1-s)\J \left(\frac{u}{  \|\nabla u\|_{H_0}    } \right) \frac{1}{1 +\ve_s}  \leq 1.
$$
Observe that $\frac{1}{1+\ve_s}<1$, so, by the convexity of $G$, we have
\begin{align*}
	1\geq (1-s)\J \left(\frac{u}{  \|\nabla u\|_{H_0}    } \right) \frac{1}{1 +\ve_s} \geq (1-s)\J \left(\frac{u}{ (1+\ve_s) \|\nabla u\|_{H_0}    } \right) .
\end{align*}
Then, by definition of the norm, we obtain that
$$
[[u]]_{s,G}\leq (1+\ve_s) \|\nabla u\|_{H_0},
$$
from where, taking the limit in $s$, we conclude that
$$
\limsup_{s\uparrow 1}[[u]]_{s,G} \leq \|\nabla u\|_{H_0}.
$$
This concludes the proof.
\end{proof}


\section{Some examples}\label{examples}
Even though for an arbitrary generalized Young function $G$, the Theorem \ref{asym. behavior} may not be extended beyond $C^2_c(\R^n)$ functions, in this section we illustrate  some examples where Theorem \ref{asym. behavior} holds in a suitable space.

\subsection{Convex functions}
The Theorem \ref{asym. behavior} holds in particular when the Young function does not depend on the spatial variables  i.e., for any convex function $A\colon [0,\infty)\to[0,\infty)$ with $A(0)=0$ by writing  $G(x,y,t)=A(t)$. In this case it is recovered the results from  \cite{ACPS} and \cite{FBS}. In particular, this includes the case of powers given  in \cite{BBM}.

\subsection{Double phase functions}\label{exam1}

Let $1<q\leq p<\infty$ and consider a function $a(x,y)$ continuous in the second variable such that for some constants $a_\pm$
$$
0<a_-\leq a(x,y) \leq a_+<\infty, \quad \mbox{for any } (x,y)\in \R^n\times\R^n.
$$
Under these assumptions we consider the generalized Young function $G(x,y,t)=t^q + a(x,y)t^p$, $t\geq 0$. In this case, 
$$
\J(u)\colon=\mathcal{J}_{s,p,q}(u)=\iint_{\R^n\times\R^n} \left( \frac{|u(x)-u(y)|^q}{|x-y|^{n+sq}} + 
 a(x,y) \frac{|u(x)-u(y)|^p}{|x-y|^{n+sp}} \right)\,dxdy
$$
and we have the following result.

\begin{prop}\label{prop. exam1}
Let $u\in W^{1,p}(\R^n)\cap W^{1,q}(\R^n)$. Then,
$$
\lim_{s\uparrow 1} (1-s)\mathcal{J}_{s,p,q}(u)= \mathcal{K}_{n,q} \int_{\R^n} |\nabla u(x)|^q \,dx + \mathcal{K}_{n,p}  \int_{\R^n} a(x,x)   |\nabla u(x)|^p\,dx
$$
where
$$
\mathcal{K}_{n,\kappa}:=\frac{1}{\kappa} \int_{\mathbb{S}^{n-1}}  |w_n|^\kappa \,dS_w, \qquad \kappa>1.
$$
Conversely, if $u\in L^p(\R^n)\cap L^q(\R^n)$ is such that
$$
\liminf_{s\uparrow 1} (1-s)\mathcal{J}_{s,p,q}(u)<\infty,
$$
then $u\in W^{1,p}(\R^n)\cap W^{1,q}(\R^n)$.
\end{prop}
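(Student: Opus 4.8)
The plan is to prove the two implications separately: the forward limit follows by recognizing $\mathcal{J}_{s,p,q}$ as an instance of the general modular $\J$ already treated in Theorem \ref{asym. behavior}, while the converse reduces to bounding $\mathcal{J}_{s,p,q}$ below by two unweighted Gagliardo energies and invoking the classical characterization of Sobolev spaces. For the direct statement I would first verify that $G(x,y,t)=t^q+a(x,y)t^p$ is a generalized Young function meeting the structural hypotheses of Section \ref{gral.young.functions}. Its density $g(x,y,t)=qt^{q-1}+p\,a(x,y)t^{p-1}$ vanishes at $t=0$, is positive, nondecreasing and right continuous in $t$ and tends to infinity, so $G$ is admissible; moreover \eqref{Hi1'} holds with $C_1=1+a_-$, $C_2=1+a_+$ since $G(x,y,1)=1+a(x,y)$, \eqref{Hi3'} holds because $a(x,\cdot)$ is continuous, and \eqref{Hi4} holds with $p^-=q$, $p^+=p$ because
\begin{equation*}
\frac{t\,g(x,y,t)}{G(x,y,t)}=\frac{q\,t^q+p\,a(x,y)\,t^p}{t^q+a(x,y)\,t^p}
\end{equation*}
is a convex combination of $q$ and $p$ and thus lies in $[q,p]$. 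Since $W^{1,q}(\R^n)\cap W^{1,p}(\R^n)=W^{1,p^-}(\R^n)\cap W^{1,p^+}(\R^n)$, Corollary \ref{corolary} then yields $\lim_{s\uparrow1}(1-s)\mathcal{J}_{s,p,q}(u)=\int_{\R^n}H_0(x,|\nabla u(x)|)\,dx$, and it remains only to evaluate $H_0$: inserting $G(x,x,\tau)=\tau^q+a(x,x)\tau^p$ in \eqref{limit function} and computing $\int_0^1 r^{\kappa-1}\,dr=\kappa^{-1}$ for $\kappa\in\{q,p\}$ separates the two powers and gives $H_0(x,t)=\mathcal{K}_{n,q}t^q+a(x,x)\mathcal{K}_{n,p}t^p$, whose integral against $|\nabla u|$ is exactly the claimed right-hand side.

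For the converse I would use the uniform positivity $a(x,y)\geq a_-$ to bound
\begin{equation*}
\mathcal{J}_{s,p,q}(u)\geq \iint_{\R^n\times\R^n}\frac{|u(x)-u(y)|^q}{|x-y|^{n+sq}}\,dxdy+a_-\iint_{\R^n\times\R^n}\frac{|u(x)-u(y)|^p}{|x-y|^{n+sp}}\,dxdy.
\end{equation*}
Both summands are nonnegative, so by monotonicity of $\liminf$ the finiteness of $\liminf_{s\uparrow1}(1-s)\mathcal{J}_{s,p,q}(u)$ forces each of $\liminf_{s\uparrow1}(1-s)\iint\frac{|u(x)-u(y)|^q}{|x-y|^{n+sq}}\,dxdy$ and $\liminf_{s\uparrow1}(1-s)\iint\frac{|u(x)-u(y)|^p}{|x-y|^{n+sp}}\,dxdy$ to be finite as well. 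Since $u\in L^q(\R^n)\cap L^p(\R^n)$ and $1<q\leq p<\infty$, the converse direction of the Bourgain-Brezis-Mironescu characterization \cite{BBM} then gives $u\in W^{1,q}(\R^n)$ and $u\in W^{1,p}(\R^n)$ separately, hence $u\in W^{1,q}(\R^n)\cap W^{1,p}(\R^n)$.

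I do not expect a serious obstacle: the only genuinely delicate point, the passage of the off-diagonal weight $a(x,y)$ to its diagonal value $a(x,x)$ as $s\uparrow1$, is already absorbed by Corollary \ref{corolary}, and ultimately by Lemma \ref{conv. Young function}, where hypothesis \eqref{Hi3'} is precisely what lets one replace $a(x,x-\rho^{1/(1-s)}w)$ by $a(x,x)$ in the limit. Both implications are therefore short once the general BBM formula and the classical Sobolev characterization are granted; the only care needed is the bookkeeping of constants in the explicit evaluation of $H_0$ and the routine verification that the double-phase $G$ satisfies \eqref{Hi1'}--\eqref{Hi4}.
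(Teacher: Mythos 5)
Your proof is correct, and both implications land where the paper's do, but your forward direction is routed differently: the paper proves the limit for $u\in C^2_c(\R^n)$ via Theorem \ref{asym. behavior}, then redoes the density extension by hand, bounding $(1-s)\mathcal{J}_{s,p,q}$ using the boundedness of $a$ together with \cite[Theorem 1]{BBM} and then ``arguing as in \cite[Theorem 2]{BBM}''; you instead verify that $G(x,y,t)=t^q+a(x,y)t^p$ satisfies \eqref{Hi1'}--\eqref{Hi4} with $p^-=q$, $p^+=p$ (your convex-combination computation for \eqref{Hi4} and the constants $1+a_\pm$ for \eqref{Hi1'} are right) and then invoke Corollary \ref{corolary} wholesale, which already contains the density argument once and for all. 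This buys a shorter, more modular proof at the cost of hiding the quantitative upper bound inside Corollary \ref{corolary}; the paper's route keeps the explicit estimate visible, which is what it actually reuses for the converse. Your explicit evaluation of $H_0$ from \eqref{limit function}, giving $H_0(x,t)=\mathcal{K}_{n,q}t^q+a(x,x)\mathcal{K}_{n,p}t^p$, is a step the paper leaves implicit and is computed correctly. For the converse, your argument is essentially the paper's but more explicit and slightly cleaner: you use the lower bound $a(x,y)\geq a_-$ (which the paper never writes down, though its appeal to \cite[Theorem 2]{BBM} needs it) to dominate two unweighted Gagliardo energies, and you correctly split the $\liminf$ via $A_s\leq A_s+B_s$ for nonnegative terms rather than the false ``liminf of a sum splits'' direction, after which the classical BBM converse applies separately for the exponents $q$ and $p$, both strictly greater than $1$. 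The only point at the same level of informality in both your write-up and the paper is the passage of the BBM converse from a bounded domain to $\R^n$, which is standard by exhaustion and not a gap specific to your argument.
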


\begin{proof}
From Theorem \ref{asym. behavior} the result holds for any $u\in C_c^2(\R^n)$. By the boundedness of $a$ and \cite[Theorem 1]{BBM}, 
\begin{align*}
\iint_{\R^n\times\mathbb{R}^n}a(x,y)\frac{|u(x)-u(y)|^p}{|x-y|^{s+sp}}\,dy &\leq 
a_+ \iint_{\R^n\times\mathbb{R}^n}\frac{|u(x)-u(y)|^p}{|x-y|^{s+sp}}\,dy\\
&\leq \frac{a_+ n \omega_n}{p}\left(\frac{1}{1-s} \|\nabla u\|_p^p + \frac{2^p}{s} \|u\|_p^p\right).
\end{align*}
and
$$
\iint_{\R^n\times\mathbb{R}^n}\frac{|u(x)-u(y)|^q}{|x-y|^{n+sq}}\,dy \leq  \frac{n\omega_n}{q}\left(\frac{1}{1-s} \|\nabla u\|_q^q + \frac{2^q}{s} \|u\|_q^q\right).
$$
Therefore, arguing as in \cite[Theorem 2]{BBM}, the result is extended to an arbitrary $u\in W^{1,p}(\R^n)\cap W^{1,q}(\R^n)$, and it holds that if
$$
\liminf_{s\uparrow 1} (1-s)\mathcal{J}_{s,p,q}(u)<\infty,
$$
then $u\in W^{1,p}(\R^n)\cap W^{1,q}(\R^n)$.
\end{proof}

\subsection{Logarithmic perturbations of powers}

Let $a$ be as in the previous subsection. We consider the generalized Young function  $G(x,y,t)=a(x,y)t^p (\log^+(t)+1)$, $t\geq 0$, where $p>1$ and $\log^+ (t):=\max\{0,\log t\}$.  In this case, 
$$
\J(u)=\iint_{\R^n\times\R^n} a(x,y)\frac{|u(x)-u(y)|^p}{|x-y|^{n+sp}} \left(\log^+\left(\frac{|u(x)-u(y)|}{|x-y|^s}\right)+1\right) \,dxdy
$$
and we have the following result:

\begin{prop}
Let  $u\in W^{1,\overline{G}}(\R^n)$. Then it holds that
$$
\lim_{s\uparrow 1} (1-s) \mathcal{J}_{s,G}(u) = \int_{\R^n} H_0(x,|\nabla u(x)|)\,dx,
$$
where 
\begin{align*}
H_0(x,t)=
\begin{cases}
\displaystyle a(x,x) t^p \mathcal{K}_{n,p}, &\text{ when }\, t|w_n| \leq 1,\\
\displaystyle a(x,x) t^p \left[ \mathcal{K}_{n,p} \left( \frac{p-1}{p} + \log t\right)+ \mathcal{K}_{\log, n,p}\right] + \frac{a(x,x)}{p^2}, &\text{ when }\, t|w_n|> 1,
\end{cases}
\end{align*}
being $w_n$ the $n-$th coordinate of any point in $\mathbb{S}^{n-1}$,
$$
\mathcal{K}_{n,p}=\frac{1}{p}\int_{\mathbb{S}^{n-1}}|w_n|^p\,dS_w \qquad \mbox{and} \qquad
\mathcal{K}_{\log,n,p}=\frac{1}{p}\int_{\mathbb{S}^{n-1}}|w_n|^p \log |w_n|\,dS_w.
$$
Conversely, if $u\in L^{\overline{G}}(\R^n)$ is such that
$$
\liminf_{s\uparrow 1} (1-s)\mathcal{J}_{s,G}(u)<\infty,
$$
then $u\in W^{1,\overline{G}}(\R^n)$.
\end{prop}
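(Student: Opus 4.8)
The plan is to reduce the statement to the established Bourgain-Brezis-Mironescu theory for fractional Orlicz-Sobolev spaces. The key point is that here $G$ factorizes as $G(x,y,t)=a(x,y)A(t)$, where $A(t):=t^p(\log^+(t)+1)$ is a Young function \emph{independent} of the spatial variables and $a$ is continuous in its second variable with $0<a_-\le a\le a_+<\infty$. A direct computation gives $tA'(t)/A(t)\in[p,p+1]$, so $A$ satisfies \eqref{Hi4} with $p^-=p$ and $p^+=p+1$, and both $A$ and its complementary function verify $\Delta_2$. Since $\overline G(x,t)=a(x,x)A(t)$ is equivalent to $A$, we have $L^{\overline G}(\R^n)=L^A(\R^n)$ and $W^{1,\overline G}(\R^n)=W^{1,A}(\R^n)$ with comparable norms, and writing $\mathcal J^A_s(v):=\iint_{\R^n\times\R^n}A(|D_sv|)\,d\mu$ for the unweighted Orlicz modular we obtain the sandwich $a_-\mathcal J^A_s(v)\le\J(v)\le a_+\mathcal J^A_s(v)$.

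For the direct implication I would first identify $H_0$. By \eqref{limit function} and the factorization, $H_0(x,t)=a(x,x)A_0(t)$ with $A_0(t):=\int_0^1\int_{\mathbb S^{n-1}}A(t|w_n|r)\,dS_w\frac{dr}{r}$; evaluating this integral by splitting according to whether $t|w_n|r\le1$ or $t|w_n|r>1$ (so that $\log^+$ becomes $0$ or $\log$) produces the piecewise formula in the statement, with the constants $\mathcal K_{n,p}$ and $\mathcal K_{\log,n,p}$ arising from the resulting spherical integrals. This is a routine primitive computation that I would only sketch. Once $H_0$ is known, Theorem \ref{asym. behavior} immediately yields the limit formula for every $u\in C^2_c(\R^n)$.

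To extend the formula to all of $W^{1,A}(\R^n)$ I would run the density argument of Corollary \ref{corolary}: given $u\in W^{1,A}(\R^n)$, choose $u_k\in C^2_c(\R^n)$ with $u_k\to u$ in $W^{1,A}(\R^n)$ and split $(1-s)\J(u)-\int_{\R^n}H_0(x,|\nabla u|)\,dx$ through the three usual terms. The term comparing $u_k$ with its own limit vanishes as $s\uparrow1$ by the $C^2_c$ case; the term comparing $\int_{\R^n}H_0(x,|\nabla u_k|)\,dx$ with $\int_{\R^n}H_0(x,|\nabla u|)\,dx$ vanishes as $k\to\infty$ because $H_0\simeq\overline G$ by Proposition \ref{prop.H0} and $u_k\to u$ in $W^{1,\overline G}$; and the remaining term $(1-s)|\J(u)-\J(u_k)|$ is controlled, uniformly in $s$, by combining the sandwich $\J(u-u_k)\le a_+\mathcal J^A_s(u-u_k)$ with the uniform-in-$s$ Orlicz Bourgain-Brezis-Mironescu bound of \cite{FBS} (which dominates $(1-s)\mathcal J^A_s(v)$ by the $W^{1,A}$-norm of $v$) and the modular continuity of \cite[Proposition 3.7]{AACS}. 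Letting $s\uparrow1$ and then $k\to\infty$ proves the limit on $W^{1,A}(\R^n)=W^{1,\overline G}(\R^n)$. For the converse, if $u\in L^A(\R^n)$ has $\liminf_{s\uparrow1}(1-s)\J(u)<\infty$, then $a\ge a_-$ gives $\liminf_{s\uparrow1}(1-s)\mathcal J^A_s(u)<\infty$, and the compactness direction of the Orlicz Bourgain-Brezis-Mironescu theorem of \cite{FBS} (the analogue of \cite[Theorem 2]{BBM}) forces $u\in W^{1,A}(\R^n)=W^{1,\overline G}(\R^n)$.

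The main obstacle is precisely the passage from $C^2_c$ to the whole space $W^{1,\overline G}(\R^n)$. Unlike Corollary \ref{corolary}, where one reduces to $W^{1,p^-}\cap W^{1,p^+}$ and invokes the classical power estimates of \cite{BBM}, a generic $u\in W^{1,A}$ need not lie in $W^{1,p^+}=W^{1,p+1}$ because of the genuine logarithmic growth; hence one cannot use the two-power bound and must instead rely on the fully Orlicz uniform estimate of \cite{FBS} to control $(1-s)\mathcal J^A_s(u-u_k)$ independently of $s$. Equivalently, the delicate issue is to certify that the off-diagonal weight $a(x,y)$ contributes only its diagonal value $a(x,x)$ in the limit, uniformly along the approximating sequence; the concentration of $d\mu$ on the diagonal as $s\uparrow1$, the continuity of $a(x,\cdot)$, and the $\Delta_2$ property are exactly the ingredients that make this uniform control available.
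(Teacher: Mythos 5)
Your proposal is correct and follows essentially the same route as the paper: the paper likewise computes $H_0$ explicitly by splitting the integral at $t_*=1/(t|w_n|)$ (according to whether $\log^+$ vanishes), obtains the $C^2_c$ case from Theorem \ref{asym. behavior}, and reduces the extension to all of $W^{1,\overline{G}}(\R^n)=W^{1,A}(\R^n)$ to the unweighted Orlicz modular via the bound $\J(u)\le a_+\iint_{\R^n\times\R^n} A(|D_su|)\,d\mu$, invoking the last part of the proof of \cite[Theorem 4.1]{FBS} for both the density step and the converse statement. Your observation that the two-power reduction of Corollary \ref{corolary} is unavailable here (a generic $u\in W^{1,A}$ need not lie in $W^{1,p+1}$) is precisely why the paper appeals to the full Orlicz estimate of \cite{FBS} rather than to the power bounds of \cite{BBM}.
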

\begin{proof}
In this case, we can split the following integral as
\begin{align*}
\int_0^1 G(x,x,t|w_n|r)\frac{dr}{r}&= 
a(x,x) t^p |w_n|^p \left(\int_0^1 r^{p-1}\,dr + \int_0^1 r^{p-1} \log^+( t |w_n|r)\,dr\right).
\end{align*}
Define $t_*:= \frac{1}{t|w_n|}$. If $t_*\geq 1$,  then  $r\leq t_*$ for all $r\in (0,1)$ and in this case $\log^+(t|w_n|r)=0$, giving that
$$
\int_0^1 G(x,x,t|w_n|r)\frac{dr}{r} = a(x,x) \frac{t^p}{p} |w_n|^p .
$$
When $t_*<1$, we have
$$
\int_0^1 r^{p-1} \log^+( t |w_n|r)\,dr =\int_{t_*}^1 r^{p-1} \log( t |w_n|r)\,dr  =
\frac{1}{p^2}\left( p \log(t|w_n|) + \frac{1}{(t|w_n|)^p}-1 \right),
$$
which implies
$$
\int_0^1 G(x,x,t|w_n|r)\frac{dr}{r} = a(x,x) \frac{t^p}{p} |w_n|^p \left(\frac{p-1}{p} + \frac{1}{p(t|w_n|)^p} + \log(t|w_n|) \right),
$$
and the expression of $H_0(x,t)$ follows just by integrating the variable $w$ in $\mathbb{S}^{n-1}$.

Now, from Theorem \ref{asym. behavior} the result holds for any $u\in C_c^2(\R^n)$. On the other hand, by the boundedness of $a$, we have that
\begin{align*}
\mathcal{J}_{s,G}(u) &\leq 
a_+ \iint_{\R^n\times\mathbb{R}^n}A\left( \frac{|u(x)-u(y)|}{|x-y|^s}\right)\frac{dxdy}{|x-y|^n},
\end{align*}
where $A\colon [0,\infty)\to[0,\infty)$ is the Young function given by $A(t)=t^p(\log^+(t)+1)$ with $p>1$. Then,  arguing as in the last part of the proof of \cite[Theorem 4.1]{FBS}, the result holds for any $u\in W^{1,\overline{G}}(\R^n)$ (space which is equal to  $W^{1,A}(\R^n)$, since $A$ and $\overline G$ are equivalent Young functions), and it holds that if
$$
\liminf_{s\uparrow 1} (1-s)\mathcal{J}_{s,G}(u)<\infty,
$$
then $u\in W^{1,A}(\R^n)$.
\end{proof}

\subsection{Spaces with variable exponent}
Given a continuous function in the second variable $p\colon \R^n\times\R^n \to \R$ such that $1<p^-\leq p(x,y) \leq p^+ <\infty$ for all $x,y\in \R^n$, and a function $a$ as in the previous example, consider the generalized Young function $G(x,y,t)=a(x,y)t^{p(x,y)}$. In this case,
$$
\J(u):=\mathcal{J}_{s,p}(u)=\iint_{\R^n\times\R^n} a(x,y) \frac{|u(x)-u(y)|^{p(x,y)}}{|x-y|^{n+sp(x,y)}} \,dxdy.
$$

\begin{prop} \label{Wp+p-}
Let $u\in W^{1,p^+}(\R^n)\cap W^{1,p^-}(\R^n)$. Then it holds that
$$
\lim_{s\uparrow 1} (1-s) \mathcal{J}_{s,p}(u) = \int_{\R^n} H_0(x,|\nabla u(x)|)\,dx,
$$
where
$$
H_0(x,t)=K_{n,p} t ^{p(x,x)}, \quad \text{ with }\quad  K_{n,p}=\frac{a(x,x)}{p(x,x)}\int_{\mathbb{S}^{n-1}} |w_n|^{p(x,x)} \,dS_w.
$$
\end{prop}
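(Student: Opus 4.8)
The plan is to recognize that the variable-exponent Young function $G(x,y,t)=a(x,y)t^{p(x,y)}$ falls within the scope of the structural hypotheses \eqref{Hi1'}--\eqref{Hi4}, so that the limit formula for smooth functions provided by Theorem \ref{asym. behavior} is available and its extension to $W^{1,p^+}(\R^n)\cap W^{1,p^-}(\R^n)$ is supplied directly by Corollary \ref{corolary}. First I would verify the three structural assumptions. Condition \eqref{Hi1'} holds with $C_1=a_-$ and $C_2=a_+$, since $G(x,y,1)=a(x,y)$ and $a$ is bounded between these constants. Condition \eqref{Hi3'} is immediate from the continuity of $a(x,\cdot)$ and $p(x,\cdot)$ in the second variable. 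For \eqref{Hi4}, a direct computation gives $g(x,y,t)=\partial_t G(x,y,t)=a(x,y)p(x,y)t^{p(x,y)-1}$, whence $tg(x,y,t)/G(x,y,t)=p(x,y)$, which lies in $[p^-,p^+]$ by hypothesis; this simultaneously confirms that $g$ satisfies the defining properties $(i)$--$(iii)$ of a generalized Young function, since $p(x,y)-1>0$.

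The only genuine computation is the explicit identification of the limit function $H_0$ from its definition \eqref{limit function}. Inserting $G(x,x,t|w_n|r)=a(x,x)(t|w_n|r)^{p(x,x)}$ and factoring out $a(x,x)\,t^{p(x,x)}$, the double integral separates into the product of an integral over $\mathbb{S}^{n-1}$ and the radial integral $\int_0^1 r^{p(x,x)-1}\,dr=1/p(x,x)$. This yields
$$
H_0(x,t)=\frac{a(x,x)}{p(x,x)}\left(\int_{\mathbb{S}^{n-1}}|w_n|^{p(x,x)}\,dS_w\right) t^{p(x,x)}=K_{n,p}\,t^{p(x,x)},
$$
which is exactly the claimed expression.

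Finally, since the target space in the statement is precisely $W^{1,p^+}(\R^n)\cap W^{1,p^-}(\R^n)$ and the structural hypotheses have been verified, the conclusion follows at once from Corollary \ref{corolary}, whose proof already carries out---uniformly for any admissible $G$---the approximation argument extending \eqref{eq.asym.behavior} from $C^2_c(\R^n)$ to $W^{1,p^+}(\R^n)\cap W^{1,p^-}(\R^n)$, via the density of smooth functions together with the modular bounds from Lemma \ref{prop. 1} and the classical Bourgain-Brezis-Mironescu estimate. In this sense the present proposition is essentially a specialization of Corollary \ref{corolary}: no new limiting argument is needed beyond the hypothesis check and the evaluation of $H_0$. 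The only mild point to stress, which is where all the analytic content resides, is that the two constant-exponent norms $p^{\pm}$ genuinely control the variable-exponent modular $\mathcal{J}_{s,p}$ from above, so that the approximating sequence converges in the relevant modular sense; this domination is precisely what the two-sided power bound in Lemma \ref{prop. 1} guarantees, and it is the step I would flag as the conceptual crux rather than a routine estimate.
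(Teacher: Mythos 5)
Your proposal is correct and follows essentially the same route as the paper: explicit evaluation of $H_0$ from \eqref{limit function} (the separation into $\int_0^1 r^{p(x,x)-1}\,dr=1/p(x,x)$ times the spherical integral is exactly right), Theorem \ref{asym. behavior} for $C^2_c(\R^n)$, and the two-sided power bound $a_-\min\{t^{p^-},t^{p^+}\}\leq G(x,y,t)\leq a_+\max\{t^{p^-},t^{p^+}\}$ to pass to $W^{1,p^+}(\R^n)\cap W^{1,p^-}(\R^n)$. The only difference is organizational: where the paper reruns the approximation argument ``as in [BBM, Theorem 1]'', you invoke the paper's own Corollary \ref{corolary}, whose proof is precisely that approximation argument, so after your (correct) verification of \eqref{Hi1'}--\eqref{Hi4} the two proofs coincide in substance.
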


\begin{proof}
In this case the expression of $H_0$ is immediate. From  Theorem \ref{asym. behavior} the limit holds for any $u\in C_c^2(\R^n)$. Due to the assumptions on $p$, one has that $a_-\min\{t^{p^+},t^{p^-}\} \leq G(x,y,t) \leq a_+\max\{t^{p^+},t^{p^-}\}$ for any $t\geq 0$ and $x,y\in \R^n$. Then proceeding as in \cite[Theorem 1]{BBM} (or \cite[Corollary 4]{K}), the limit holds for any $u\in W^{1,p^+}(\R^n)\cap W^{1,p^-}(\R^n)$.
\end{proof}

\section{Anisotropic $s-$H\"older quotients} \label{sec5}
Fractional anisotropic spaces in which in each coordinate direction the functions have different fractional regularity and different integrability have been considered recently, see \cite{CKW,DB}.

In this section we consider a family of functionals in which the $s-$H\"older quotients depend on only one direction. Given a generalized Young function $G\colon \R^n\times\R^n \times [0,\infty) \to \R$, $s\in (0,1)$ and $k\in\{1,\ldots, n\}$, we consider the energy functional
$$
\mathcal{J}_{s,G}^k (u):=\int_{\R^n} \int_{\R} G(x,x-he_k,|D_s^k u(x,h)|)\,\frac{dhdx}{|h|},
$$
where $e_k$ is the $k-$th canonical vector in $\R^n$ and the $s-$H\"older quotient $D_s^k u$ in the direction $e_k$ is defined as
$$
D_s^k u(x,h):=\frac{u(x-he_k)-u(x)}{|h|^s}.
$$
These functionals naturally define the Sobolev-like spaces $W^{s,G}_k(\R^n)$ as
$$
W^{s,G}_k(\R^n)=\left\{ u \in L^{\overline G(\cdot)}(\R^n) \colon \mathcal{J}_{s,G}^k(u) <\infty\right\}.
$$

With the same technique than in Theorem \ref{asym. behavior}, we prove a BBM result for smooth functions.

\begin{thm}\label{thm-anisotropic}
Let $u\in C^2_c(\R^n)$ and  $k\in\{1,\ldots, n\}$. Then, 
\begin{align}\label{eq. asym. behavior}
	\lim_{s\uparrow 1} (1-s) \J^k(u) = \int_{\R^n} H_0\left(x,\left|\frac{\partial u(x)}{\partial x_k} \right|\right)\,dx,
\end{align}
where 
$$
H_0(x,t)=2 \int_0^1 G(x,x,tr) \frac{dr}{r} .
$$
\end{thm}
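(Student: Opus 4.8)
The plan is to follow verbatim the three-step scheme of the proof of Theorem \ref{asym. behavior}: first reduce to a pointwise (in $x$) statement, then establish the convergence of the inner one-dimensional integral, and finally produce an integrable majorant in $x$ so as to pass the limit under the integral sign by dominated convergence. Writing
$$
F_s^k(x):=\int_{\R} G(x,x-he_k,|D_s^k u(x,h)|)\,\frac{dh}{|h|},
$$
so that $\J^k(u)=\int_{\R^n}F_s^k(x)\,dx$, the goal reduces to showing $\lim_{s\uparrow1}(1-s)F_s^k(x)=H_0(x,|\partial u(x)/\partial x_k|)$ together with a uniform $L^1$ bound. Compared with Theorem \ref{asym. behavior}, the inner integral is now one-dimensional and the direction $e_k$ is fixed, so neither passage to spherical coordinates nor a rotation is needed; the factor $2$ in $H_0(x,t)=2\int_0^1 G(x,x,tr)\,dr/r$ will arise from adding the contributions of $h>0$ and $h<0$.

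For the pointwise limit I first establish the one-dimensional analogue of Lemma \ref{conv. Young function}: for fixed $x\in\R^n$, $t\geq0$ and either sign,
$$
\lim_{s\uparrow1}(1-s)\int_0^1 G(x,x\mp re_k,t\,r^{1-s})\,\frac{dr}{r}=\int_0^1 G(x,x,tr)\,\frac{dr}{r}.
$$
This follows exactly as in Lemma \ref{conv. Young function} via the change of variables $\rho=r^{1-s}$ (which turns $(1-s)\,dr/r$ into $d\rho/\rho$ and sends $r=\rho^{1/(1-s)}\to0$ for $\rho\in(0,1)$), using the continuity \eqref{Hi3'} of $G(x,\cdot,t)$ at $x$ and the majorant $\rho^{p^--1}$ supplied by Lemma \ref{prop. 1}. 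To reach this point I split $F_s^k$ into the regions $|h|<1$ and $|h|\geq1$. On $|h|\geq1$, monotonicity and convexity of $G$ in $t$, \eqref{Hi1'} and Lemma \ref{prop. 1} give
$$
I_2^k\leq\int_{|h|\geq1}G\big(x,x-he_k,2\|u\|_\infty\big)\frac{dh}{|h|^{1+s}}\leq C_2\,2^{p^+}\max\{\|u\|_\infty^{p^-},\|u\|_\infty^{p^+}\}\frac{2}{s},
$$
so that $(1-s)I_2^k\to0$. On $|h|<1$, a second-order Taylor expansion of $u$ together with the local Lipschitz continuity of $G$ in $t$ replace $|D_s^k u(x,h)|$ by $|\partial u(x)/\partial x_k|\,|h|^{1-s}$ up to an error controlled by $\int_{|h|<1}|h|^{1-s}\,dh$, which is killed by $(1-s)$; splitting the resulting integral according to the sign of $h$ and applying the one-dimensional lemma twice yields $\lim_{s\uparrow1}(1-s)F_s^k(x)=H_0(x,|\partial u(x)/\partial x_k|)$.

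The main obstacle is the construction of the global majorant, where the anisotropy is felt. Assuming $\mathrm{supp}\,u\subset B_R(0)$ with $R>1$ and decomposing $x=x'+x_ke_k$ with $x'\perp e_k$, one observes that $F_s^k(x)=0$ unless $|x'|<R$, so the relevant region is an unbounded tube around the $e_k$-axis and decay can only be expected in the $x_k$ direction. On $\{|x|<2R\}$ the bounds for $I_1^k$ and $I_2^k$ already give $(1-s)|F_s^k(x)|\leq C$, exactly as in \eqref{bound.F1}--\eqref{bound.F2}. On $\{|x|\geq2R\}\cap\{|x'|<R\}$ one has $u(x)=0$ and $|x_k|$ large, and writing $y=x-he_k\in B_R$ gives $|h|=|x_k-y_k|\geq\frac12|x_k|$; proceeding as in \eqref{bound.F3} one obtains $(1-s)|F_s^k(x)|\leq C|x_k|^{-1-s}\leq C|x_k|^{-3/2}$ for $s\geq\frac12$. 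Since the transversal section $\{|x'|<R\}$ has finite $(n-1)$-dimensional measure and $|x_k|^{-3/2}$ is integrable over $\{|x_k|\geq\sqrt3\,R\}$, the function
$$
C\big(\chi_{\{|x|<2R\}}+|x_k|^{-3/2}\chi_{\{|x'|<R,\,|x|\geq2R\}}\big)\in L^1(\R^n)
$$
is an admissible majorant, independent of $s$, and the theorem follows from the Dominated Convergence Theorem. The delicate point, absent in the isotropic setting, is precisely that integrability of this majorant relies on combining the one-dimensional decay in $x_k$ with the boundedness of the cross-section in the directions orthogonal to $e_k$.
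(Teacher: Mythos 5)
Your proposal is correct and follows essentially the same route as the paper: reduce to $|h|<1$ via the tail bound and a Taylor expansion, perform the change of variables $\rho=h^{1-s}$ separately on $h>0$ and $h<0$ (whence the factor $2$), and pass to the limit by the continuity of $G$ in its second argument together with dominated convergence. Your explicit construction of the majorant on the tube $\{|x'|<R\}$, with decay $|x_k|^{-1-s}$ for $|x|\geq 2R$, is precisely the anisotropic adaptation that the paper compresses into ``arguing as in the last part of the proof of Theorem \ref{asym. behavior}'' -- and it is the right one, since the isotropic majorant $|x|^{-n-1/2}$ makes no sense for the one-dimensional inner integral; note only that $|x|\geq 2R$ and $|x'|<R$ yield $|h|\geq\left(1-\tfrac{1}{\sqrt{3}}\right)|x_k|$ rather than $\tfrac12|x_k|$, a harmless change of constant.
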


\begin{proof}
Let $u\in C^2_c(\R^n)$ and let $x$ be fixed. Without loss of generality we can assume $k=1$.  Proceeding similarly as in the proof of Theorem \ref{asym. behavior}, we can obtain the following
\begin{align*}
\lim_{s\uparrow 1} (1-s)\int_{\R}& G(x,x-he_1,|D_s^1 u(x,h)|)\,\frac{dh}{|h|}\\ 
&=  
\lim_{s\uparrow 1} (1-s)\int_{|h|<1} G\left(x,x-he_1,\left| \nabla u(x)\cdot e_1 h|h|^{-s} \right| \right)\, \frac{dh}{|h|}\\
&=  
\lim_{s\uparrow 1} (1-s)\int_{|h|<1} G\left(x,x-he_1,|\nabla u(x)\cdot e_1||h|^{1-s} \right)\, \frac{dh}{|h|}\\
&=  
\lim_{s\uparrow 1} (1-s)\int_{|h|<1} G\left(x,x-he_1,\left|\frac{\partial u(x)}{\partial x_1}\right||h|^{1-s} \right)\, \frac{dh}{|h|}\\
&=:\lim_{s\uparrow 1} (1-s) I_s(x).
\end{align*}

Now, observe that, similarly as in the proof of  Lemma \ref{conv. Young function}, performing the change of variables $h^{1-s}=\rho$ we get that $(1-s)\frac{dh}{h}=\frac{d\rho}{\rho}$ which yields
\begin{align*}
I_s(x) &=
\int_0^1 G\left(x,x-he_1,\left|\frac{\partial u(x)}{\partial x_1}\right| h^{1-s} \right) \frac{dh}{h} -\int_{-1}^0 G\left(x,x-he_1,\left|\frac{\partial u(x)}{\partial x_1}\right| (-h)^{1-s} \right) \frac{dh}{h}  \\
&=\int_0^1 \left(G\left(x,x-he_1,\left|\frac{\partial u(x)}{\partial x_1}\right| h^{1-s} \right) 
+ G\left(x,x+he_1,\left|\frac{\partial u(x)}{\partial x_1}\right| h^{1-s} \right) 
 \right)\frac{dh}{h}\\
&=\frac{1}{1-s}\int_0^1 \left(G\left(x,x-\rho^\frac{1}{1-s}e_1,\left|\frac{\partial u(x)}{\partial x_1}\right|\rho\right)+ 
G\left(x,x+\rho^\frac{1}{1-s}e_1,\left|\frac{\partial u(x)}{\partial x_1}\right|\rho\right) \right)\frac{d\rho}{\rho}.
\end{align*}
Then, since we assume $G$ continuous in the second parameter and $\rho\in(0,1)$, 
$$
\lim_{s\uparrow 1} (1-s) I_s(x) = 2 \int_0^1 G\left(x,x,\left|\frac{\partial u(x)}{\partial x_1}\right|\rho \right) \frac{d\rho}{\rho} .
$$
Thus, arguing as in the last part of the proof of Theorem \ref{asym. behavior}, the limit \eqref{eq. asym. behavior} holds.  

Finally, as in Proposition \ref{prop.H0}, the function $H_0(x,t)=2\int_0^1 G(x,x,tr)\frac{dr}{r}$, up to constant, is comparable with $\overline G(x,t)$.
\end{proof}

In the case in which there is no dependence on the spatial variables and a the $s-$H\"older quotient has a power behavior, i.e., 
$$
G(x,y,t) = t^p, \quad p>1, \, s\in (0,1),
$$
for each $k\in \{1,\ldots, n\}$, the limit function $H_0$ is easily computed as
$$
H_0(x,t)= 2t^p \int_0^1  r^{p-1}\,dr = \frac{2}{p}t^p,
$$
which implies that 
$$
\lim_{s\uparrow 1}(1-s) \int_{\R^n} \int_{\R} \frac{|u(x-he_k)-u(x)|^p}{|h|^{1+sp}}\,dhdx = \frac{2}{p} \int_{\R^n} \left| \frac{\partial u(x)}{\partial x_k}\right|^p\,dx
$$
holds for any $u\in C^2_c(\R^n)$. Thus, arguing as in Proposition \ref{Wp+p-}, it holds for any $u\in W^{s,p}_k(\R^n)$. This recovers the limit result of \cite{DB}.

\begin{rem}
When the dependence of the variables $x$ and $y$ is removed in the anisotropic energy, much more information can be obtained. Indeed, in the case in which
$$
\mathcal{J}_{s,G}^k (u):=\int_{\R^n} \int_{\R} G(|D_s^k u(x,y)|)\frac{dh}{|h|}\,dx
$$
following the last part of the proof of \cite[Theorem 4.1]{FBS}, it is not too hard to see that, for any $u\in L^G(\R^n)$ and $k\in\{1,\ldots, n\}$, 
$$
\lim_{s\uparrow 1} (1-s) \J^k(u) = 2 \int_{\R^n} \int_0^1 G\left(\left|\frac{\partial u(x)}{\partial x_k} \right| r \right) \frac{dr}{r} \,dx.
$$
Moreover, it is possible in this case to go further and prove a BBM type result for sequences of functions. A close inspection of \cite[Theorem 5.2]{BBM} reveals that in this case, if $0\leq s_j\uparrow 1$ and $\{u_j\}_{j\in \N}\subset L^G(\R^n)$ is such that
$$
\sup_{j\in \N}\left((1-s_j)\J^k(u_j) + \int_{\R^n} G(u_j)\,dx\right)<\infty, 
$$
then there exists $u\in L^G(\R^n)$ and a subsequence $\{u_{\ell}\}_{\ell\in\N} \subset \{u_j\}_{j\in\N}$ such that $u_{\ell}\to u$ in $L^G_{loc}(\R^n)$. Moreover, $u\in W^{1,G}(\R^n)$ and 
$$
2\int_{\R^n} \int_0^1 G\left(\left| \frac{\partial u(x)}{\partial x_k}\right|r\right)\,\frac{dr}{r}dx \leq \liminf_{\ell\to\infty} (1-s_\ell)\J^k(u_\ell).
$$
\end{rem}

\medskip
 \begin{acknowledgement}
 	The first and third authors were partially supported by CNPq with grants 304699/2021-7 and 316643/2021-1, respectively. The second author was partially supported by CAPES. The fourth author was partially supported y CONICET under grant PIP 11220150100032CO, by ANPCyT under grants PICT 2016-1022 and PICT
 	2019-3530 and by the University of Buenos Aires under grant 20020170100445BA.
 \end{acknowledgement}

\bigskip
\end{document}